\crefname{hypothesis}{Hypothesis}{Hypotheses}
\title{Krylov-Simplex method that minimizes the residual in $\ell_1$-norm or $\ell_\infty$-norm. \thanks{Draft version of 27 jan 2021}}
\author{Wim Vanroose\thanks{U. Antwerpen, Middelheimlaan 1, 2020
    Antwerpen, Belgium (\email{wim.vanroose@uantwerpen.be})} \and Jeffrey Cornelis\thanks{U. Antwerpen,
    Middelheimlaan 1, 2020 Antwerpen, Belgium (\email{jeffrey.cornelis@uantwerpen.be})}}
\begin{document}

\maketitle

\begin{abstract}
  The paper presents two variants of a Krylov-Simplex iterative method
  that combines Krylov and simplex iterations to minimize the residual
  $r = b-Ax$. The first method minimizes $\|r\|_\infty$, i.e. maximum
  of the absolute residuals. The second minimizes $\|r\|_1$, and finds
  the solution with the least absolute residuals.  Both methods search
  for an optimal solution $x_k$ in a Krylov subspace which results in
  a small linear programming problem. A specialized simplex algorithm
  solves this projected problem and finds the optimal linear
  combination of Krylov basis vectors to approximate the solution.
  The resulting simplex algorithm requires the solution of a series of
  small dense linear systems that only differ by rank-one updates.
  The $QR$ factorization of these matrices is updated each
  iteration. We demonstrate the effectiveness of the methods with
  numerical experiments.
\end{abstract}

\begin{keywords}
  Krylov Subspace, $\ell_1$-norm, $\ell_\infty$-norm,  primal simplex. 
\end{keywords}

\begin{AMS}
65F10, 90C05
\end{AMS}


\section{Introduction}
Given vectors $x,y \in \mathbb{R}^n$ the distance between them in the
$\ell_\infty$-norm is $\|x-y\|_\infty = \max_{i=1}^{n} |x_i-y_i|$. It
is the \textit{chessboard} distance or Chebyshev distance. It
is the largest difference along any of the coordinate axes. Their distance
in the $\ell_1$-norm is $\|x-y\|_1 = \sum_{i=1}^{n} |x_i-y_i|$. This
is known as the \textit{Manhattan} distance. It is the sum of the
lengths along these coordinate axes.

In this paper we minimize the residual $\|b-Ax_k\|_1$, known as
\textit{least absolute residuals} or \textit{least absolute
  deviations}(LAD), and $\|b-Ax_k \|_\infty$-norm, known as
\textit{least maximal absolute residual}, for a linear system
$A\in\mathbb{R}^{m \times n}$ and $b \in \mathbb{R}^n$ where $x_k \in
x_0 + \mathcal{K}_k$ is chosen from a Krylov subspace
$\mathcal{K}_k$.

Minimizing this residual in the $\ell_1$- and $\ell_\infty$-norm
promotes certain properties in the residual and the solution. The
$\ell_1$-norm is well known to promote sparsity. In many applications
such as reconstruction in inverse problems or compressed sensing the
solutions are sparse and by using the $\ell_1$ norm this property is
promoted in the solution \cite{donoho2006compressed,
  tibshirani1996regression}. In contrast, the $\ell_\infty$ norm
penalizes the largest deviations what is useful in model calibration
where, for example, the maximal deviation between the model and the
data is minimized.  For example, in optical lithography the optimal
settings of a lens system are such that the maximal absolute deviation
over the whole optical field is minimized. While minimizing the average
deviation using $\ell_2$-norm still allows large local deviations
resulting in faulty transistors.

Krylov subspace methods solve sparse linear algebra problems by
finding coefficients $y_k \in \mathbb{R}^k$ that describe the solution
as ${x_k = x_0 + V_k y_k}$ and the residual as ${r_k = r_0- AV_ky_k}$,
where $V_k$ is a basis for $\mathcal{K}_k$.  The sparse matrix-vector
products $Av$ and the required dot-products to generate this basis
$V_k$ can be easily parallelized over many computer cores.  Combined
with preconditioners Krylov methods can solve systems with millions of
unknowns in a scalable way \cite{saad,petsc-user-ref}.
    
Minimizing the residual of a square matrix over a Krylov subspace in
the $\ell_2$-norm leads to a small least squares problem,
${\min_{y_k} \left\|H_{k+1,k}y_k - \|r_0\|e_1 \right\|_2}$, where
$H_{k+1,k}$ is a small Hessenberg matrix.  This small projected
problem is solved with Givens rotations \cite{liesen2013krylov}.  This
is known as \textit{Generalized Minimal Residual}(GMRES) and requires
the explicit storage of the basis vectors \cite{saad}.
  
Similarly, minimizing the $A$-norm of the error $\|e\|_A$ of positive
definite matrix leads to  ${T_{kk}y - \|r_0\|_2 e_1 = 0}$, where
$T_{kk}$ is the tri-diagonal Lanczos matrix, which can be solved with
recurrence relations. This method is known as \textit{Conjugate
  Gradients} (CG) \cite{hestenes}.

The optimization problem can be formulated as an LP problem and solved
with state of the art LP solvers such as Highs \cite{highs} that use a
simplex that exploits the sparsity and hypersparsity appearing in the
problem.

Fountoulakis, Gondzio and collaborators have studied various data
science problem that include $\ell_1$-norms with a primal-dual
interior point method combined with inner preconditioned Krylov
iterations \cite{gondzio, fountoulakis, fountoulakis2016,
  dassios2015preconditioner}. Here Newton outer iterations are
combined with Krylov inner iterations.

Problems with $l_1$-norm can also be reformulated as a iterative
reweighted least-squares (IRLS) problem, where the weights are adapted
to each each inner iteration \cite{holland1977robust}.

Luce, Tebbens and collaborators explored Krylov methods to solve the
nucleus that appears after reordering of rows and columns in the
simplex method \cite{luce2009factorization}. They explored a
representative set of MIP problems and concluded that the size of the
nucleus is so small combined with adverse spectral properties that
Krylov methods are not competitive.

In contrast in this paper we use Krylov as an outer iteration and
project the problem by optimizing $\min\|b-Ax\|_1$ and
$\min\|b-Ax \|_\infty$ over a Krylov subspace.  After projection on a
basis $V_k$ these problems leads to a small linear programming problem
with dense matrices.  In the outer iteration the Krylov subspace in
expanded and the inner simplex iteration solve the small LP problem
for the optimal solution within the current Krylov subspace.  This
leads to a sequence of small dense matrices $B$ and $B^T$ that need to
be solved.  Since these matrices appear in a sequence where they
differ only by rank-one updates, we can easily update their $QR$
factorization \cite{golub13, elble2012review}.

The outline of the paper is as follows. In
section~\ref{review_krylov_simplex} we shortly explain the basics of
Krylov subspaces and the revised simplex method.  We then, in
section~\ref{sec:max}, derive the Krylov-Simplex method for residual
minimization in $\ell_\infty$.  In section~\ref{sec:l1} we repeat this
for the $\ell_1$-norm.  In section~\ref{sec:numerical results} we give
some convergence bound and illustrate the method with numerical
applications.  We then discuss the results an draw conclusions.


\section{Basics of Krylov and revised simplex method} \label{review_krylov_simplex}

We summarize some required basics from Krylov subspaces and the primal
revised simplex method.

\subsection{Krylov Subspaces}
For a square matrix $A \in\mathbb{R}^{n \times n}$, a right hand side
$b \in \mathbb{R}^n$ and an initial guess $x_0 \in \mathbb{R}^n$,  a
Krylov subspace is
\begin{equation}
  \mathcal{K}_k(A,r_0) = \text{span} \{r_0, Ar_0, A^2r_0, \ldots , A^{k-1}r_0 \},
\end{equation}
with $r_0= b-Ax_0$.  With the help of the Arnoldi
routine, algorithm \ref{arnoldi_algorithm}, we can make an orthogonal
basis $V_k$ and a Hessenberg matrix $H_{k+1,k} \in \mathbb{R}^{k+1
  \times k}$ such that
\begin{equation}
  AV_k = V_{k+1} H_{k+1,k}.
\end{equation}
We then represent the solution as $x_k = x_0 + V_k y_k$.
  
For a non-square matrix $A \in \mathbb{R}^{m \times n}$, a right hand
side $b \in \mathbb{R}^m$ and an initial guess $x_0 \in \mathbb{R}^n$,
a Krylov subspace is then
\begin{equation}
 \mathcal{K}_k(A^TA,A^Tr_0) = \text{span} \{A^T r_0,(A^T A) A^T
 r_0, (A^T A)^2 A^T r_0, \ldots , (A^TA)^{k-1}A^Tr_0 \}.
\end{equation}
The Golub-Kahan  \cite{golub1965} relation is
  \begin{equation}
    A V_{k}  = U_{k+1} B_{{k+1},k},
  \end{equation}
where $B_{k+1,k} \in \mathbb{R}^{{k+1}\times k}$ is a bi-diagonal
matrix containing the $\alpha_k$ on the diagonal and $\beta_k$ on the
first upper diagonal. They are calculated in algorithm
\ref{golubkahan_algorithm}.  The resulting basis $V_k$ is used to
represent the solution as $x_k = x_0 + V_k y_k$.

\begin{tabular}{cc}
  \begin{minipage}{0.4\textwidth}
    \begin{algorithm}[H]    
      \begin{algorithmic}[1]
        \STATE{$r_0 = b-Ax_0$}
        \STATE{$v_1 = r_0/\|r_0\|_2$}
        \FOR{$k=1,2,\ldots,$}
        \STATE{$ w = Av_i$}
        \FOR{$j=1,\ldots,k$}
        \STATE{$h_{jk} = v_j^T w$}
        \STATE{$w = w- h_{jk} v_k$}
        \ENDFOR
        \STATE{$h_{k+1,j} = \|w\|_2$}
        \STATE{$v_{k+1} = w/h_{k+1,k}$}
        \ENDFOR
      \end{algorithmic}
      \caption{Arnoldi Iteration \label{arnoldi_algorithm}}
    \end{algorithm}
  \end{minipage}
  &
  \begin{minipage}{0.4\textwidth}
    \begin{algorithm}[H]    
      \begin{algorithmic}[1]
        \STATE{$r_0 = b-Ax_0$}
        \STATE{$u_1 = r_0/\|r_0\|_2$}
        \STATE{$\beta_0=0$}
        \FOR{$k=1,2,\ldots$}
        \STATE{$v_k = A^Tu_k - \beta_{k-1} v_{k-1}$}
        \STATE{$\alpha_k = \|v_k\|_2$}
        \STATE{$v_k = v_k/\alpha_k$}
        \STATE{$u_{k+1} = A v_k -\alpha_k u_k$}
        \STATE{$\beta_{k+1} = \|u_{k+1}\|_2$}
        \STATE{$u_{k+1} = u_{k+1} / \beta_{k+1}$}
        \ENDFOR
      \end{algorithmic}
      \caption{ Golub-Kahan bi-diagonalization     \label{golubkahan_algorithm}}
    \end{algorithm}
\end{minipage}
\end{tabular}
\vspace{0.5cm}

A happy breakdown occurs when the new vector $Av_{k}$ is a linear
combination of the existing basis vectors $V_k$.  Then the Arnoldi
relation becomes $AV_k = V_k H_{k,k}$ with a square Hessenberg
matrix. Similarly, we have $AV_k = U_k B_{k,k}$ with a square matrix
$B_{k,k}$.  This means $V_k$ spans an invariant subspace under $A$ and
the solution can be found in this subspace \cite{liesen2013krylov}.
In practice Krylov methods are always combined with preconditioners to
accelerate the convergence.

For the remainder of the text we will use $\mathcal{K}_k$ to denote
the Krylov subspace. Depending on the dimensions of the matrix this refers
to the Krylov subspace generated by Arnoldi, for square matrices, or
Golub-Kahan, for non-square matrix.

For more background on Krylov subspaces we refer to
\cite{liesen2013krylov, saad}.


\subsection{Revised primal Simplex}
The revised simplex method solves a linear programming problem in its
standard form
\begin{equation}
    \begin{aligned}
      \min &\,c^T x, \\
      \text{s.t.}&\, Ax=b, \\
      &\,x \ge 0,
    \end{aligned}
\end{equation}
where $A \in \mathbb{R}^{m \times n }$, $b \in \mathbb{R}^m$ and $c$
and $x \in \mathbb{R}^n$.  The corresponding Karush-Kuhn-Tucker (KKT)
conditions are
  \begin{equation}
    \begin{aligned}
      A^T \lambda + s = c, \quad    Ax = b, \\
      x \ge 0, \quad   s \ge 0, \\ x_i s_i = 0, \quad \forall i \in \{ 1, \ldots,n\}.
    \end{aligned}
  \end{equation}
The revised simplex method solves this system of equations.
  
It is assumed that $A$ is full rank such that there is a subset of
columns of $A$ that form a non-singular sub-matrix $B \in \mathbb{R}^{m
  \times m}$ with $B^{-1}b \ge 0$.  The indices of these columns are denoted by $\mathcal{B}$
and are called basic. The remaining indices are denoted as
$\mathcal{N} = \{1,\ldots, n\} \setminus \mathcal{B}$ and are called
non-basic. The remaining columns of $A$ are denoted as $N$. The
solution is then split in $x_B = x|_{i \in \mathcal{B}}$ and $x_N =
x|_{i \in \mathcal{N}}$,

In the first step, the initial $B$ is factorized and we solve the primal
equation
\begin{equation}
    Ax = B x_B + N x_N = b,
\end{equation}
as $x_B = B^{-1} b$ and $x_N=0$.  A Phase-I simplex typically finds a
$B$ such that $x_B =B^{-1}b \ge 0$.

In the second step we solve the dual equation, $B^T\lambda = c_B$ and
$N^T \lambda +s_N =c_N$
\begin{equation}
     \lambda = B^{-T} c_B
\end{equation}
reusing the factorization of $B$. We then have $s_N =c_N- N^T
\lambda$, also known as the reduced costs. If all $s_N \ge 0$, we have
found the optimal solution.

Otherwise, we update the indices  of the basic set $\mathcal{B}$. We
select a $q \in \mathcal{N}$ where $s_q <0$ and solve $B d = A_q$ for
the search direction $d$.  We then update
\begin{equation}
    x^+_B = x_B - d \alpha \quad \text{and} \quad x_q = \alpha.
\end{equation}
Since $x$ needs to be non-negative, the step size $\alpha$ is
determined from
\begin{equation}
    \alpha = \min_{i \in \mathcal{B} , d_i >0} (x_B)_i/d_i
\end{equation}
with $r$ denoting the minimizing index.  At this point 
$x_r$ with $r \in \mathcal{B}$ becomes zero and the non-basic $q$ became
non-zero. We change $\mathcal{B}=\mathcal{B} \cup \{q\} \setminus \{
r\}$.

We then go to the second step, avoiding a solve of the primal
equation.  This loop is repeated until all Lagrange multipliers $s_N$,
associated with $x \ge 0$, are positive. 

The matrices $B$ are updated in each iteration depending on the
changes in the basic set $\mathcal{B}$. Since $\mathcal{B}$ only
changes by one index the $B$ matrices only differ by a column. This is a rank-one
update.  The factorization of $B$ can be efficiently updated. Further details on each
step of the revised simplex method and updating the factorization can be
found in \cite{nocedal2006numerical, elble2012review}.


\section{Krylov-Simplex for the $\ell_\infty$-norm} \label{sec:max}
In this section we derive a Krylov-simplex method for the minimization
of $r_k = b-Ax_k$ in the $\ell_\infty$-norm with $x_k \in x_0 +
\mathcal{K}_k$.  This leads to a linear programming (LP) problem for
$y_k$, the coefficients in the basis $V_k$ that spans
$\mathcal{K}_k$.  The projected problem is solved by a specialized primal simplex.

Section \ref{sec:kkt infty} derives the KKT conditions of the
projected LP problem for a given Krylov subspace $\mathcal{K}_k$.
Section \ref{sec:kkt primal dual infty} solves the primal and dual
equation from the KKT conditions for a given basic set $\mathcal{B}_k$. If the
resulting Lagrange multipliers are positive, we have found the optimal
solution for the current Krylov subspace. Otherwise we pivot as
discussed in section \ref{sec:pivot infty}.  Section
\ref{sec:expanding infty} discusses how an in initial
$\mathcal{B}_{k+1}$ is found for $\mathcal{K}_{k+1}$.

\begin{definition}\label{max-norm Krylov}
Let $A \in \mathbb{R}^{m \times n}$ a matrix and $b \in \mathbb{R}^m$
a right hand side.  The iterates of the $\ell_\infty$-norm Krylov-Simplex
are given by
  \begin{equation}
    x_k := \text{argmin}_{ x \in x_0 + \mathcal{K}_k} \|r_k\|_\infty.
  \end{equation}
\end{definition}

The coefficients $y_k$ that describe the iterates of the
$\ell_\infty$-norm Krylov-Simplex for $\mathcal{K}_k$, with a basis
$V_k$, are the solution of the following linear programming problem
\begin{equation}
  \begin{aligned}
    \min &\, \gamma_k \\
 \text{s.t.} &\, -\gamma_k  \mathbb{1}_m \le r_0 - AV_k y_k \le \gamma_k \mathbb{1}_m,
  \end{aligned}
\end{equation}
where the $\gamma_k := \|r_k\|_\infty \in \mathbb{R}$, the max
of the absolute values of the residual at iteration $k$. The
$\mathbb{1}_m$ denotes a column vector with ones of length
$m$. Alternatively,
\begin{equation}
  \begin{aligned}
    \min & \, \gamma_k\\
  \text{s.t.}  &\,  AV_k y_k -\gamma_k \mathbb{1}_m \le r_0,\\
    &\, -AV_k y_k -\gamma_k  \mathbb{1}_m \le -r_0.
  \end{aligned}
\end{equation}
or, in matrix notation
  \begin{equation}\label{LP}
  \begin{aligned}
      \min_{\gamma_k,y_k} &\, \begin{pmatrix}1 & 0 \end{pmatrix}^T \begin{pmatrix}\gamma_k \\ y_k\end{pmatrix}\\
     &\, \begin{pmatrix}
        -\mathbb{1}_m  & AV_k \\
        -\mathbb{1}_m  &-AV_k
      \end{pmatrix}
      \begin{pmatrix}\gamma_k\\ y_k 
      \end{pmatrix}
      \le \begin{pmatrix}
        r_0\\
        -r_0
      \end{pmatrix}.
  \end{aligned}
 \end{equation}
We do not explicitly add the condition $\gamma_k >0$ to the optimization problem. 
  
\begin{figure}
    \begin{center}
      \includegraphics[width=\textwidth]{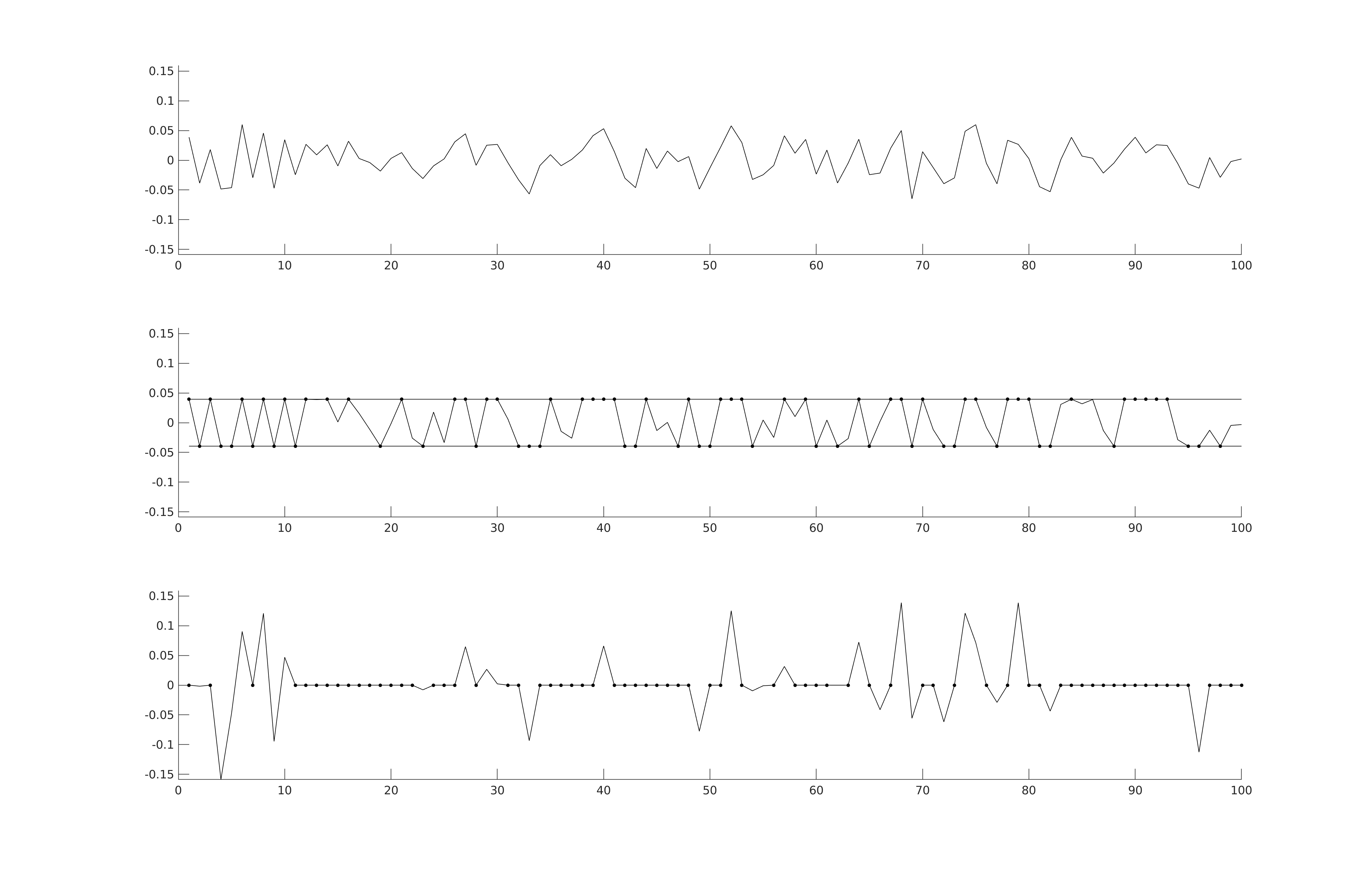}
    \end{center}
    \caption{The optimal residual for a Krylov subspace of size k=70
      in the $\ell_2$ (top), $\ell_\infty$ (middle) and $\ell_1$-norm
      (bottom) for a non-square $100 \times 90$ matrix.  The $\ell_2$
      solution is calculated with GMRES applied to the normal
      equations and has the least squared residual.  The $\ell_\infty$
      solution has the least maximal absolute value of the
      residual. The active constraints are indicated with filled
      circles. The number of residuals that hit the max is equal to
      $k+1$.  The residual that gives minimal $\ell_1$ in the same
      Krylov subspace is shown at the bottom. There are $k$ residuals
      equal to zero, the size of the Krylov subspace.  In this paper
      the optimal subset of residuals that is active for each Krylov
      subspace is found by a Simplex method}
  \end{figure}

  \subsection{KKT conditions} \label{sec:kkt infty}
 In this subsection we derive the KKT conditions for the projected LP
 problem \eqref{LP}. The Lagrangian is
\begin{equation}
  \mathcal{L}(\gamma_k, y_k,\lambda, \mu) = \gamma_k  -  \lambda^T \left( r_0 - AV_k y_k + \gamma_k \mathbb{1}_m\right)  -   \mu^T \left(- r_0 +  AV_k y_k  + \gamma_k \mathbb{1}_m \right).
\end{equation}
This leads to the following KKT conditions:
\begin{eqnarray}
     \mathbb{1}_m^T (\lambda+\mu) &=& 1,  \label{kkt_dual_first}\\
     V_k^T A ^T (\lambda-\mu) &=& 0, \label{kkt_dual_second}\\
     -\gamma_k \mathbb{1}_m  \le r_0 - AV_k y_k  &\le& \gamma_k \mathbb{1}_m  \label{primal_max}\\
    \lambda \ge 0,  \quad   \mu  &\ge& 0,\\
    \lambda_i (r_0 -AV_ky + \gamma_k\mathbb{1}_m)_i &=&0,  \quad \forall      i \in  \{1, \ldots, m \}\label{complementarity_first}\\
    \mu_i (\gamma_k\mathbb{1}_m-(r_0 -AV_ky))_i &=&0,      \quad \forall      i \in  \{1, \ldots, m \}.  \label{complementarity_second}
\end{eqnarray}
The first two equations, \eqref{kkt_dual_first} and
\eqref{kkt_dual_second} are the dual equations, the third equation is
the primal equation and the last two, \eqref{complementarity_first}
and \eqref{complementarity_second} are the complementarity conditions
that couple all equations together.

\begin{remark}
  The second condition leads to $V_k \perp A^T(\lambda-\mu)$. When  we make
  use of the Arnoldi relation this is $V_{k+1} \perp \lambda- \mu$. In other words, the
  vector $\lambda -\mu \notin \mathcal{K}_{k+1}$.
\end{remark}

\subsection{Solving the primal and dual equations of KKT system for Krylov subspace $\mathcal{K}_k$}
\label{sec:kkt primal dual infty}

\begin{definition}
  A basic set $\mathcal{B}_k = \mathcal{B}_{k, <} \cup \mathcal{B}_{k, >}$ for $\mathcal{K}_k$ and $\gamma_k >0$  is a subset of
  $k+1$ indices from $\{1, \ldots, m \}$ such that
  \begin{equation}
    (r_0-AV_ky_k)_i=-\gamma_k \quad \forall i \in \mathcal{B}_{k,<},
  \end{equation}
  and
  \begin{equation}
    (r_0-AV_ky_k)_i=\gamma_k \quad \forall i \in \mathcal{B}_{k,>}.
  \end{equation}
and for all $i \in \mathcal{N}_k := \{1, \ldots, m \} \setminus \mathcal{B}_k$ holds that
  \begin{equation}
    |(r_0-AV_ky_k)_i|\le \gamma_k \quad \forall i \in \mathcal{N}_k
  \end{equation}
  and
  \begin{equation}
    B_k :=   \begin{pmatrix}
      -\mathbb{1}_{\mathcal{B}_{k,<}}  &AV_k |_{\mathcal{B}_{k,<}}  \\
      \mathbb{1}_{\mathcal{B}_{k,<}}   & AV_k |_{\mathcal{B}_{k,>}}
    \end{pmatrix}
  \end{equation}
  with $B_k \in \mathbb{R}^{k+1 \times k+1}$ non-singular. 
\end{definition}
 So we assume that the basic set is determined by the indices of the
 residual whose absolute values are equal to the upper bound
 $\gamma_k$.  We set $\mathcal{B}_0 = \{i\}$, the index of where
 $|(r_0)_i|$ is maximal.  Note that this basic set differs
 significantly from the basic set in the revised simplex method.

We have split $\mathcal{B}_k = \mathcal{B}_{k, <} \cup \mathcal{B}_{k,
  >} $, where for $i \in \mathcal{B}_{k,<}$ holds
$(r_0-AV_ky_k)_i=-\gamma_k$. For $i \in \mathcal{B}_{k,>}$ holds
that $(r_0-AV_ky_k)_i=\gamma_k$.  This leads to the linear system
\begin{equation}\label{lmax_primal}
  \begin{pmatrix}
    -\mathbb{1}_{\mathcal{B}_{k,<}}  &AV_k |_{\mathcal{B}_{k,<}}  \\
    \mathbb{1}_{\mathcal{B}_{k,<}}   & AV_k |_{\mathcal{B}_{k,>}}
  \end{pmatrix}
  \begin{pmatrix}
    \gamma_k\\
    y_k
  \end{pmatrix}
  = \begin{pmatrix}
    r_0 |_{\mathcal{B}_{k,<}}\\
    r_0 |_{\mathcal{B}_{k,>}}
  \end{pmatrix}.
\end{equation}
The system determines the coefficients $y_k$ and the bound
$\|r_k\|_\infty = \gamma_k$.  The $\gamma_k$ and the
coefficients $y_k$ are now primal feasible.  This is system is denoted, using the definition of $B_k$ as
$B_k \begin{pmatrix} \gamma_k \\ y_k
\end{pmatrix} = r_0|_{\mathcal{B}_k}$.

We now solve for the Lagrange multipliers of the basic set
$\mathcal{B}_k$. All multipliers corresponding to the non-basic
variables $\mathcal{N}_k$ are zero, due to the complementarity
conditions \eqref{complementarity_first} and
\eqref{complementarity_second}.  The equations \eqref{kkt_dual_first}
and \eqref{kkt_dual_second} become, for the their non-zero Lagrange
multipliers
\begin{equation}
  \begin{aligned}
    \mathbb{1}^T_{\mathcal{B}_{k, <}} \lambda_{\mathcal{B}_{k, <}} + \mathbb{1}^T_{\mathcal{B}_{k, <}} \mu_{\mathcal{B}_{k, <}} = 1,\\
    V^T_k A^T ( \lambda-\mu)|_{\mathcal{B}_k} = 0,
  \end{aligned}
\end{equation}
or in matrix form 
\begin{equation} \label{dual_system}
    \begin{pmatrix}
      -\mathbb{1}^T_{B_{k,<}} & \mathbb{1}^T_{B_{k,<}} \\
      V_k^T A^T      |_{B_{k,<}} & V_k^T A^T |_{B_{k,>}}
    \end{pmatrix}
    \begin{pmatrix}
      \lambda_{\mathcal{B}_{k,<}}\\
      -\mu_{\mathcal{B}_{k,>}}
    \end{pmatrix}
    =
        B_k^T     \begin{pmatrix}
      \lambda_{\mathcal{B}_{k,<}}\\
      -\mu_{\mathcal{B}_{k,>}}
        \end{pmatrix}
        =
    \begin{pmatrix}
      -1\\
      0
    \end{pmatrix} = -c_{\mathcal{B}_k},
\end{equation}
where $c$ is the cost vector of the LP problem.

Due to the complementarity condition, for all $i \in \mathcal{N}_k$
the $\lambda_i=0$ and $\mu_i=0$.

At this point we satisfy the following KKT conditions. We are feasible
for \eqref{primal_max}, we satisfy the dual equations
\eqref{kkt_dual_first} and \eqref{kkt_dual_second} and we satisfy the
complementarity conditions.
  
If $\lambda \ge 0$ and $\mu \ge 0$, the solution is optimal for the
current Krylov subspace and $\mathcal{B}^*_k = \mathcal{B}_k$.  The
optimal guess for the solution within $\mathcal{K}_k$ is then $x_k =
x_0 + V_k y^*_k$.
  
\subsection{Updating the basic set $\mathcal{B}_k$} \label{sec:pivot infty}
However, if one of the Lagrange multipliers from \eqref{dual_system}
is negative, we can improve the solution by a pivot. Suppose the
Lagrange multiplier with index $q \in \mathcal{B}_k$ is negative.  We
then make the $q$-th constraint inactive by modifying the system as
\begin{equation}
   \begin{pmatrix}
    -\mathbb{1}|_{\mathcal{B}_{k,<}}  &AV_k |_{\mathcal{B}_{k,<}}  \\
    \mathbb{1}|_{\mathcal{B}_{k,>}}   & AV_k |_{\mathcal{B}_{k,>}}
  \end{pmatrix}
  \begin{pmatrix}
    \gamma^+_k\\
    y^+_k
  \end{pmatrix}
  = \begin{pmatrix}
    r_0 |_{\mathcal{B}_{k,<}}\\
    r_0 |_{\mathcal{B}_{k,>}}
  \end{pmatrix}
  + \alpha e_j,
\end{equation}
where $e_j$ is the $j$-th unit vector, where $j$ is the position of
$q$ in the subset $\mathcal{B}_k$.

We can write
\begin{equation}
  B_k \begin{pmatrix}
    {\gamma_k}^+\\
    y_{k}^+
  \end{pmatrix}
  = 
    B_k \begin{pmatrix}
    \gamma_k\\
    y_{k} 
    \end{pmatrix}
     + \alpha e_j,
\end{equation}
or
\begin{equation}
  \begin{pmatrix}
    {\gamma_k}^+\\
    y_{k}^+
  \end{pmatrix}
  = 
    \begin{pmatrix}
    \gamma_k\\
    y_{k} 
    \end{pmatrix}
    +  B_k^{-1} e_j \alpha
     =     \begin{pmatrix}
    \gamma_k\\
    y_{k} 
    \end{pmatrix}
    + d  \alpha,
\end{equation}
with a search direction $d \in \mathbb{R}^{k+1}$. 

The change in $\gamma_k$ is $\Delta \gamma_k = {\gamma_k}^+ - \gamma_k =
d_1 \alpha$ and should be negative, if $\gamma_k=\|r_k\|_\infty$
diminishes.  We can define $\tilde{d} = d_{2:end}/d_1 \in \mathbb{R}^k$. We then have
\begin{equation}
  \begin{aligned}
    y_{k}^+  &=    y_{k} + \tilde{d}\Delta \gamma_k, \\
    r_k^+ &= r_k - AV_k\Delta y = r_k - AV_k\tilde{d} \Delta \gamma_k.
  \end{aligned}
\end{equation}
What is the maximal decrease in $\gamma_k$ we can realize while
keeping primal feasibility?  For each choice of $\Delta\gamma_k$,
the following inequalities should remain valid
\begin{equation}
  - {\gamma_k}^+   \le (r_k - AV_k \Delta y_k )_i\le {\gamma_k}^+  \quad \forall i \in \mathcal{N}_k \cup \{q\}.
\end{equation}
We include $\{q\}$ because, the opposite bound of the removed
constraint can become active. Or
\begin{equation}
  -(\gamma_k + \Delta \gamma_k)  \le \left(r_k - AV_k \tilde{d} \Delta \gamma_k)\right)_i \le \gamma_k + \Delta \gamma_k, \quad \forall i \in \mathcal{N}_k \cup \{q\}.
\end{equation}
This leads to the following inequalities that should remain valid for
any choice of $\Delta \gamma$
\begin{eqnarray}
  (-\mathbb{1}_m + AV_k \tilde{d})_i  \Delta \gamma_k   \le (r_k)_i  + \gamma_k, \quad \forall i \in \mathcal{N}_k \cup \{q\},\\
  -(\mathbb{1}_m + AV_k \tilde{d})_i  \Delta \gamma_k   \le \gamma_k -(r_k)_i,  \quad \forall i \in \mathcal{N}_k \cup \{q\}.
\end{eqnarray}
Since we know that the right hand sides are positive or zero for the
current guess, and $\Delta \gamma$ should be negative, we only have to
check the bounds for indices where the factor of the left hand side is
negative.

The step size $\Delta \gamma_k$ is then determined by the bounds.  We
determine the index $r \in \mathcal{N}_k$ as follows:
\begin{equation}
  \Delta \gamma_{k,<}  : = \max_{ i \in \mathcal{N}_k \cup \{q\}, (-\mathbb{1}_m + AV_k \tilde{d})_i < 0}  \frac{(r_k  + \gamma_k)_i}{(-\mathbb{1}_m + AV_k \tilde{d})_i}
\end{equation}
and
\begin{equation}
   \Delta \gamma_{k,>}  : = \max_{ i \in \mathcal{N}_k \cup \{q\}, -(\mathbb{1}_m + AV_k \tilde{d})_i < 0} \frac{\gamma_k - (r_k)_i}{-(\mathbb{1}_m + AV_k \tilde{d})_i}.  
\end{equation}
We then have
\begin{equation}
  \Delta \gamma_k = \max(\Delta \gamma_{k,<}, \Delta \gamma_{k,>}).
\end{equation}
We now update the coefficients, the bound and the basic set as
\begin{equation}
  \begin{cases}
    y_k &\leftarrow y_k + \tilde{d} \Delta \gamma_k,\\
    \gamma_k &\leftarrow \gamma_k + \Delta \gamma_k,\\
    \mathcal{B}_k &\leftarrow \mathcal{B}_k \cup \{r\} \setminus \{ q\}, \\
    r_k &\leftarrow r_k - AV\tilde{d} \Delta \gamma_k.
  \end{cases}
\end{equation}
Note that updating the coefficients in this way eliminates the need to
solve the primal equation, described in \eqref{lmax_primal}, in the next
iteration.

\begin{remark}
  The method can have degenerate steps when a $|(r_0-V_ky_k)_q| =
  \gamma_k$ for a $q \notin \mathcal{B}_k$.  Then it finds a steps
  size $\Delta \gamma_k = 0$.
\end{remark}

\subsection{
  Initial basic set for $\mathcal{B}_{k+1}$ for Krylov subspace
  $\mathcal{B}_{k+1}$} \label{sec:expanding infty}

Suppose we have found the optimal basic set $\mathcal{B}^*_k$ for
Krylov subspace $\mathcal{K}_k$ with basis $V_k$.  The Krylov basis is
now expanded to $V_{k+1} = [ V_k, v_{k+1}]$ with Arnoldi or
Golub-Kahan bi-diagonalization, depending on the dimensions of the matrix
$A$.  We now determine the initial basic set $\mathcal{B}_{k+1}$ for
$\mathcal{K}_{k+1}$.

\begin{definition}
  Let $\mathcal{B}_k^*$ be the optimal basic set for Krylov subspace
  $\mathcal{K}_k$ with $V_k$ as basis.  Let $V_{k+1} = [V_k, v_k]$ be
  a basis for Krylov subspace $\mathcal{K}_{k+1}$.  An initial basic
  feasible guess $\mathcal{B}_{k+1}$ for Krylov subspace
  $\mathcal{K}_{k+1}$ is determined by the solution the following
  auxiliary problem
  \begin{equation}
    \begin{aligned}
      \min_{\gamma_{k+1}, y_{k+1}} &\,\gamma_{k+1},\\
\text{s.t.}&\,      |A(x + V_{k+1} y_{k+1})-b|_{i\in \mathcal{B}^*_k} = \gamma_{k+1},\\
 &     |A(x + V_{k+1}y_{k+1})-b|_{i\in \mathcal{N}^*_k} \le  \gamma_{k+1}.
    \end{aligned}
  \end{equation}
\end{definition}

For this auxiliary problem it easy to create a feasible
solution. Indeed, a solution $(\gamma^*_k,y^*_k)$ of $\mathcal{K}_k$
is a feasible solution for the auxiliary problem with $y_{k+1} = (y^*_k,
0)$. However, it only has $k+1$ active constraints, while the solution
of the auxiliary problem can have $k+2$ active constraints, the $k+1$
existing constraints from the $\mathcal{B}^*_k$ and one additional
constraint from the $\mathcal{N}_k$ that becomes active in the optimal
point.  This gives a initial feasible basic set
$\mathcal{B}_{k+1}$.

We write $y_{k+1} = (y^+_k,\alpha)$ and use an initial guess $(y^*_k,0)$,
the solution of the previous Krylov problem. We then change $\alpha$,
keeping the constraints $\mathcal{B}^*_k$ active, until we make one
additional constraint active.

So we change $\gamma^+_{k+1}$, $y_k^+ \in \mathbb{R}^k$ and $\alpha$,
i.e. $k+2$ variables, while they satisfy the $k+1$ equality
constraints
\begin{equation}\label{one-dimensional}
  \begin{aligned}
     &\, \begin{pmatrix}
        -\mathbb{1}_{\mathcal{B}_{k,<}}  & AV_k |_{\mathcal{B}_{k,<}} &Av_{k+1} |_{\mathcal{B}_{k,<}}\\
        \mathbb{1}_{\mathcal{B}_{k,>}}  & AV_k |_{\mathcal{B}_{k,>}}  &Av_{k+1}|_{\mathcal{B}_{k,>}}
      \end{pmatrix}
      \begin{pmatrix}{\gamma_{k+1}}^+\\ y^+_k \\ \alpha
      \end{pmatrix}
      = \begin{pmatrix}
        (r_0) |_{i \in \mathcal{B}_{k,<}}\\
        (r_0) |_{i \in \mathcal{B}_{k,>}}
      \end{pmatrix}.
  \end{aligned}
\end{equation}
This defines a one-dimensional subspace
\begin{equation}
  \begin{pmatrix}
    -\mathbb{1}_{\mathcal{B}_{k,<}}  & AV_k |_{\mathcal{B}_{k,<}} \\
    \mathbb{1}_{\mathcal{B}_{k,>}}  & AV_k |_{\mathcal{B}_{k,>}}  
  \end{pmatrix}
  \begin{pmatrix}\Delta \gamma_{k}\\ \Delta y_k 
  \end{pmatrix}
  =
  -
\begin{pmatrix}
Av_{k+1} |_{\mathcal{B}_{k,<}}\\
Av_{k+1}|_{\mathcal{B}_{k,>}}
\end{pmatrix}
 \alpha.
\end{equation}
We introduce a search direction $d \in \mathbb{R}^{k+1}$ that
satisfies $B_k d = - Av_{k+1} |_{\mathcal{B}_{k}}$ and write
\begin{equation}
 \begin{pmatrix}
    {\gamma_{k}}^+\\
    y_k^+\\
    \alpha 
  \end{pmatrix}
  = \begin{pmatrix}
    \gamma_k\\
    y^*_k\\
    0 
  \end{pmatrix}
  +
  \begin{pmatrix}
    \Delta \gamma_k\\
    \Delta y_k\\
    \alpha
  \end{pmatrix}
  = 
  \begin{pmatrix}
    \gamma_k\\
    y^*_k\\
    0 
  \end{pmatrix}
  +
  \begin{pmatrix}
    d\\
    1
  \end{pmatrix}
  \alpha 
\end{equation}
The bound $\gamma_k$ should decrease to $\gamma_{k+1}$. We have
$\Delta \gamma_k = \gamma_{k+1} - \gamma_k = d_1 \alpha$, which should be
negative. We set $\alpha = \Delta \gamma_k / d_1$ and write
\begin{equation}
  \begin{pmatrix}
    y_{k}^+ \\
    \alpha
  \end{pmatrix}
  =
  \begin{pmatrix}
    y^*_{k}\\0
  \end{pmatrix}
  +  \begin{pmatrix}
    d_{2:k+1}/d_1 \\
    1/d_1
  \end{pmatrix}
  \Delta \gamma_k
 = 
 y^{(0)}_{k+1}
  + \tilde{d} 
  \Delta \gamma_k,
\end{equation}
where $\Delta \gamma_{k}$ is now the step size.  The change in the
direction of the new vector in the Krylov subspace, $v_{k+1}$, is now
$\Delta \gamma_{k}/d_1$.

When we diminish $\gamma_k$ both $y_k^+$ and $\alpha$ change
accordingly. At some point it will hit the boundary of one of the
other constraints in $\mathcal{N}_k$. The index of this constraint is
now added to the active set.  This increases the active set from $k$
to $k+1$ active constraints and gives a basic feasible set $B_{k+1}$.

The maximal step size along the search direction $\tilde{d}$ is determined by
the inactive constraints.  We have
\begin{equation}
  -{\gamma_k}^+ \le (r^*_k - AV_{k+1} \tilde{d} \Delta \gamma_k ) )_i \le {\gamma_k}^+   \quad i \in \mathcal{N}_k.
\end{equation}
This leads to the inequalities
\begin{equation}
  \begin{aligned}
  \left(-\mathbb{1}_m + AV_{k+1} \tilde{d} \right) \Delta \gamma_k  \le r^*_k + \gamma_k \mathbb{1}_m, \\
  \left(-\mathbb{1}_m - AV_{k+1} \tilde{d} \right) \Delta \gamma_k \le  \gamma_k\mathbb{1}_m - r^*_k.
  \end{aligned}
\end{equation}
Again, the right hand sides are positive.  Since $\Delta \gamma_k$ is
negative, we need to check the inequalities only where the factors on
the left are negative. This gives
\begin{equation}
  \begin{aligned}
    \Delta \gamma_{k,<}  := \max_{ i \in \mathcal{N}_k, (-\mathbb{1}_m + AV_{k+1} \tilde{d})_i < 0} \frac{ (r_k^*)_i + \gamma_k}{(-\mathbb{1}_m + AV_{k+1} \tilde{d})_i}, \\
    \Delta \gamma_{k,>} := \max_{ i \in \mathcal{N}_k, -(\mathbb{1}_m + AV_{k+1} \tilde{d})_i < 0} \frac{ \gamma_k - \left( r_k^* \right)_i}{- (\mathbb{1}_m + AV_{k+1} \tilde{d})_i},
  \end{aligned}
\end{equation}
the step size is then $\Delta \gamma_k = \max( \Delta \gamma_{k,<}, \Delta
\gamma_{k,>} )$. The index $r$ is where the bound is reached.  The initial
guess for the expanded Krylov subspace $\mathcal{K}_{k+1}$ is then
\begin{equation}
  \begin{cases}
    y_{k+1} &= \begin{pmatrix} y^*_k \\ 0 \end{pmatrix}  + \begin{pmatrix} d \\ 1 \end{pmatrix}  \Delta \gamma_k /d_1,\\
    \gamma_k &= \gamma^*_k + \Delta \gamma_k,\\
    \mathcal{B}_{k+1} &= \mathcal{B}^*_k \cup \{r\}, \\
    r_{k+1}  &= r_k - AV_{k+1}\tilde{d} \Delta \gamma_k.
  \end{cases}
\end{equation}
This results in algorithm~\ref{algo:l_infty} that is discussed in detail in section~\ref{sec:numerical results}. 

\begin{remark}
  The change in convergence is now
  \begin{equation}
    \begin{aligned}
      \|r_{k+1}\|_\infty-\|r_k\|_\infty &= \Delta \gamma_k = d_1 =
      \begin{pmatrix}
        1 & 0 & \ldots &0
      \end{pmatrix}^T d = -c^T_{\mathcal{B}_k} B_k^{-1} (Av_{k+1})|_{\mathcal{B}_k} \\
      & = (\lambda-\mu)_{\mathcal{B}_k}^T (Av_{k+1})|_{\mathcal{B}_k} = (\lambda-\mu)^T A v_{k+1},
    \end{aligned}
  \end{equation}
  where we use that $B_k^T (\lambda- \mu)|_{\mathcal{B}_k} = -c_{\mathcal{B}_k}$, from \eqref{dual_system}.   
  So the Krylov subspace expansion will only lead to an improvement
  when ${(\lambda-\mu)^T A v_{k+1} < 0}$. 
\end{remark}

\subsection{Updating the $QR$ factorization of $B_k$ and $B_k^T$ after rank-one updates}
The matrices $B_k$ and $B_k^T$ that need to be solved in both
algorithms are small and dense.  Furthermore, in the sequence they
appear in the algorithm they only differ by a rank-one
updates.  Hence we can reuse the factorization.  Reusing the
factorizations in the simplex is commonplace and essential for
performance. There is extensive literature on updating the $L$ and $U$
of a sparse LP problem \cite{elble2012review}.

Here, however, we are dealing with small dense matrices where we can use QR updates\cite{golub13}.  Rank-one updates to a matrix $B$ can be written as
\begin{equation}
  B^\prime = B + u v^T
\end{equation}
where $u$ and $v$ are vectors. 

Suppose we already have a $QR$ factorization of $B$ and we are now
interested in efficient calculation of $Q^\prime$ and $R^\prime$.
\begin{equation}
  Q^\prime R^\prime = QR + u v^T
\end{equation}

To make indexing easier in the implementation of $QR$ updates, we do
an in-place replacement when we update $\mathcal{B}_k = \mathcal{B}_k
\cup \{ r \} \setminus \{ q\}$. It means that $r$ gets the same position as
$q$ in the set $\mathcal{B}_k$.

As a result, the indices that indicate active lower or upper bounds
appear unordered in the $\mathcal{B}_k$. We then write
\begin{equation}
  B_k = \begin{pmatrix}
    (\pm 1) |_{\mathcal{B}_k}  & AV_k
  \end{pmatrix}
\end{equation}
where a $-1$ appear in the first column if the first element
$\mathcal{B}_k$ is a lower bound or a $+1$ of it is an upper bound.  We
then have the following update
\begin{equation} \label{qr_update_basic_max}
  B_k := 
  \begin{pmatrix}
    (\pm 1)|_{\mathcal{B}_k}  &AV_k |_{\mathcal{B}_k} 
  \end{pmatrix}
  +   e_j \left[ \begin{pmatrix}
    (\pm 1)_r  &(AV_k)_r
  \end{pmatrix}
  -
  \begin{pmatrix}
    (\pm 1)_q  &(AV_k)_q
  \end{pmatrix}
  \right].
\end{equation}
where $j$ is the position of $q$ in the set $\mathcal{B}_j$.

When we expand the Krylov subspace, we have
\begin{eqnarray}\label{qr_update_krylov_max}
  \begin{pmatrix}
    \pm \mathbb{1}|_{\mathcal{B}_{k+1}}  &AV_{k+1} |_{\mathcal{B}_{k+1}}  
  \end{pmatrix}
   = 
  \begin{pmatrix}
    \pm \mathbb{1}|_{\mathcal{B}_{k}}  &AV_k |_{\mathcal{B}_{k}}  & 0  \\
    0 & 0 &0 
  \end{pmatrix} \nonumber \\
  +
  \begin{pmatrix}
    0   & 0  & 0  \\
    \pm \mathbb{1}|_{r}   & AV_k |_{r}  & Av_{k+1}|_r
  \end{pmatrix}
    +
    \begin{pmatrix}
    0  &  0 &Av_{k+1} |_{\mathcal{B}_{k}}  \\
    0 & 0 &0 
    \end{pmatrix}.
\end{eqnarray}

\begin{algorithm}
  \begin{algorithmic}[1]
    \STATE{$r_0 = b-Ax_0$}
    \STATE{$\gamma_0, i = \max_i |(r_0)_i|$ }
    \STATE{ $\mathcal{B}_0 = \{i\}$, index where the max is reached.} \;
    \STATE{$V_1 = \left[ r_0/\|r_0\| \right]$ }
    \FOR{$k=1,\ldots, $maxit}
    \STATE{ Calculate $AV_k = [AV_{k-1} Av_k]$ and store and update $V_{k+1} =[V_k, v_{k+1}]$.}
    \IF{$k=1$}
       \STATE{$B_1 = \begin{pmatrix} \pm 1  \end{pmatrix}$}, \;
       \STATE{[$Q,R$] = qr($B_1$)}. \;
    \ENDIF
    \STATE { Solve $B_k d =-  Av_{k+1}|_{\mathcal{B}_k} $ using $Q$ and $R$.}\;
    \STATE { Set $d := [d(2:end)/d1 ; 1/d(1)]$}
    \STATE { Set $y_k = [y_k ; 0]$} 
    \STATE{ $r_k$, $\Delta \gamma$, $y_k$, $r$ = StepSizeKrylovExpansion (). }\;
    \STATE{   $\mathcal{B}_k = \mathcal{B}_{k-1} \cup \{r\}$, } \;
    \WHILE{ $l=1, \ldots, $ maxsimplexiters }
    \IF{$l=1$}
    \STATE{Update the $Q$ and $R$ using \eqref{qr_update_krylov_max}.}\;
    \ELSE{}
    \STATE{Update the $Q$ and $R$ using \eqref{qr_update_basic_max}.}\;
    \ENDIF
      \STATE{
        Solve $B_{k}^T \begin{pmatrix}\lambda_{\mathcal{B}_<}  \\ -\mu_{\mathcal{B}_>} \end{pmatrix} =\begin{pmatrix}  
      1\\
      0
      \end{pmatrix}$  using $Q$ and $R$.}\;
      \IF{ $\lambda \ge 0$ and $\mu \ge 0$}
      \STATE{        Break; Solution Found. }
      \ELSE{}
        \STATE {$q = \min(\lambda_i, \mu_i)$  leaving index,}
        \STATE{ $\mathcal{B}_k = \mathcal{B}_k \setminus \{q\}$, }
        \STATE{Solve $B_k d = e_q$ using $Q$ and $R$, }
        \STATE{Set $d:=d(2:end)$, Set $d_1 = d(1)$,}
        \STATE{ $r_k$, $\Delta \gamma$, $y_k$, $r$ = StepSizeSimplex (), }
        \STATE{ $\mathcal{B}_k  = \mathcal{B}_l \cup \{r \}$. }
        \ENDIF
        \ENDWHILE      
        \STATE{ $x_k = x_0 + V_k y_k $. }
        \STATE{$\|r_k\|_\infty = \gamma_k$. }
        \ENDFOR
  \end{algorithmic}
 \caption{Krylov-Simplex for $ \min_{x \in x_0 +\mathcal{K}_k }\|b-Ax \|_\infty$ \label{algo:l_infty}}
\end{algorithm}

\begin{algorithm}
  \begin{algorithmic}[1]
    \STATE{\textbf{StepSizeKrylovExpansion}() }
    \STATE{calculate $AV_kd$ and store}
    \STATE{$\Delta \gamma_< =\max_{ i \in \mathcal{N}, \cdot <0} \frac{(r_k + \gamma \mathbb{1})_i}{(-\mathbb{1}+ AV_kd)_i}$}
    \STATE{$\Delta \gamma_> = \max_{ i \in \mathcal{N}, \cdot <0} \frac{(r_k - \gamma \mathbb{1})_i}{(\mathbb{1}+ AV_kd)_i}$}
    \STATE{$\Delta \gamma = \max(\Delta \gamma_<, \Delta \gamma_>)$}
    \STATE{$r$ is blocking index}
    \STATE{$y_k^+ = y_k + d \Delta \gamma$}
    \STATE{$r_k^+ = y_k - AV_kd \Delta \gamma$}
  \end{algorithmic}
  \caption{Blocking function Krylov}
\end{algorithm}

\begin{algorithm}
  \begin{algorithmic}[1]
    \STATE{\textbf{StepSizeSimplex}() }
    \STATE{remove $q$ from $\mathcal{N}$}
    \STATE{calculate $AV_kd$ and store}
    \STATE{$\Delta \gamma_< =\max_{ i \in \mathcal{N}, \cdot <0} \frac{(r_k + \gamma \mathbb{1})_i}{(-\mathbb{1}+ AV_kd/d_1)_i}$}
    \STATE{$\Delta \gamma_> = \max_{ i \in \mathcal{N}, \cdot <0} \frac{(r_k - \gamma \mathbb{1})_i}{(\mathbb{1}+ AV_kd/d_1)_i}$}
    \STATE{$\Delta \gamma = \max(\Delta \gamma_<, \Delta \gamma_>)$}
    \STATE{$r$ is blocking index}
    \STATE{$y_k^+ = y_k + d \Delta \gamma /d_1$}
    \STATE{$r_k^+ = y_k - AV_kd \Delta \gamma /d_1$}
  \end{algorithmic}
  \caption{Step Size Simplex \label{stepsizesimplex}}
\end{algorithm}


\section{Krylov-Simplex for the $\ell_1$-norm} \label{sec:l1}
In this section we derive a Krylov-simplex method for the minimization
of $r_k = b-Ax_k$ in the $\ell_1$-norm with ${x_k \in x_0 +
\mathcal{K}_k}$.  This leads to  a similar linear programming problem
as in the previous section, where we used the $\ell_\infty$-norm.

Subsection \ref{sec:lp 1} formulates the LP problem and the
corresponding KKT conditions.  In section \ref{sec: primal dual 1}, we
solve the primal and dual equations for a given $\mathcal{B}_k$.  The
next section, \ref{sec:pivot 1}, we discuss how we update the basic
set $\mathcal{B}_k$. The section \ref{sec: update 1} discusses the
initial $\mathcal{B}_{k+1}$ for Krylov $\mathcal{K}_{k+1}$.

\begin{definition}\label{one-norm Krylov}  Let $A \in \mathbb{R}^{m \times n}$ a matrix
  and $b \in \mathbb{R}^m$ a right hand side.  The iterates of the
  $\ell_1$-norm Krylov-Simplex are given by
  \begin{equation}
    x_k := \text{argmin}_{ x \in x_0 + \mathcal{K}_k} \|r_k\|_1.
  \end{equation}
\end{definition}


\subsection{Optimization problem} \label{sec:lp 1}
Let $V_k$ be a basis for $\mathcal{K}_k$ and let the guess for the
solution be $x_k = x_0 + V_k y_k$ with coefficients $y_k$.  These
coefficients are determined by the following projected problem
\begin{equation}
  \begin{aligned}
    \min &\, \mathbb{1}_m^T \gamma_k ,\\
    \text{s.t}& \, -\gamma_k \le r_0-AV_k y_k \le \gamma_k.
  \end{aligned}
\end{equation}
where $\gamma_k$ is now a vector in $\mathbb{R}^m$.
The corresponding Lagrangian is
\begin{equation}
  \mathcal{L}(\gamma_k,y_k,\lambda,\mu) = \mathbb{1}_m^T \gamma_k-  \lambda^T \left(\left(r_0-AV_ky_k\right) +  \gamma_k\right)
  - \mu^T \left(\gamma_k - \left(r_0-AV_ky_k\right)\right),
\end{equation}
where $\lambda \in \mathbb{R}^m$ are the Lagrange multipliers for the
lower bounds and $\mu \in \mathbb{R}^m$ the Lagrange multipliers for
the upper bounds.  This leads to the KKT conditions with $i \in \{1, \ldots, m\}$
\begin{eqnarray}
\lambda + \mu &= 1, \label{firstkkt}\\
V_k^T A^T  (\lambda -  \mu) &=& 0, \label{dual}\\
 -\gamma_k  \le  (r_0-AV_k y_k) &\le& \gamma_k, \\     
   \mu \ge  0, \quad  \lambda &\ge& 0,\\
   \lambda_i ((r_0-AV_k y_k)  + \gamma_k)_i &=& 0  , \quad \forall i \in \{1,\ldots, m\}  \label{lastbutoneKKT} \\
   \mu_i (\gamma_k-(r_0-AV_k y_k))_i &=& 0,  \quad \forall i \in \{1,\ldots, m\} \label{lastkkt}
\end{eqnarray}
The first two equations, \eqref{firstkkt} and \eqref{dual}, are the
dual equations. The third is the primal equation. The last two, \eqref{lastbutoneKKT} and \eqref{lastkkt},
are the complementarity conditions.  Note the similarity with the KKT
conditions \eqref{kkt_dual_first}-\eqref{complementarity_second} in
section~\ref{sec:kkt infty}.

\begin{remark}
  Again, we note that in the solution of the KKT condition holds $\lambda-
  \mu \perp \mathcal{K}_{k+1}$ if we are using Arnoldi. Similar
  relation can be derived for Golub-Kahan.
\end{remark}

\subsection{Solving the primal and dual equations from the KKT system for Krylov subspace $\mathcal{K}_k$} \label{sec: primal dual 1}

\begin{definition}
  The basic set $\mathcal{B}_k \subset \{1,\ldots,m\}$ for Krylov 
  subspace $\mathcal{K}_k$ for the $\ell_1$-norm is a subset with $k$ indices where
  \begin{equation}
    (r_0 - AVy_k)_i = (\gamma_k)_i=0 \quad \forall i \in \mathcal{B}_k
  \end{equation}
  and
  \begin{equation}
    B_k = (AV_k)|_{\mathcal{B}_k}
  \end{equation}
  is a non-singular $k$ by $k$ matrix.
\end{definition}

 The non-basic set is then $\mathcal{N}_k = \{ 1,\ldots, m\}\setminus
 \mathcal{B}_k$ and $|\mathcal{N}_k|=m-k$.

For a given basic set $\mathcal{B}_k$, the coefficients $y_k \in
\mathbb{R}^k$ are then determined from the system
\begin{equation} \label{eq:coefficient system}
  (AV_k)|_{\mathcal{B}_k} y_k = B_k y_k= r_0|_{\mathcal{B}_k}. 
\end{equation}
The solution leads to the guess ${x_k = x_0 + V_ky_k}$.

We now determine the remaining $\gamma_i$, for $i \in \mathcal{N}_k$, as
\begin{equation}
  \gamma_i = |r_0 -AV_k y_k|.
\end{equation}
We can now split $\mathcal{N}_k = \mathcal{N}_{k,<} \cup \mathcal{N}_{k,>}$, 
where $\mathcal{N}_{k,<}$ are the indices where ${\gamma_i=-(r_0-AV_ky_k)_i}$ and
$\mathcal{N}_{k,>}$ are the indices where ${\gamma_i=(r_0-AV_ky_k)_i}$.

If $i \in \mathcal{N}_{k,<}$, then ${\gamma_i + (r_0 -AV_k y_k)_i = 0}$
while ${\gamma_i - (r_0 -AV_k y_k)_i \neq 0}$. So from the last
complementarity condition in the KKT conditions
\eqref{firstkkt}-\eqref{lastkkt} we find that the Lagrange multiplier $\mu_i=0$ for all ${i \in
\mathcal{N}_{k,<}}$ and, from the first equation from the KKT, we find
that $\lambda_i =1$.

Similarly, we find that $\mu_i=1$ for all $i \in \mathcal{N}_{k,>}$.
So we have that
\begin{equation}\label{nonbasic_lambda}
  (\lambda_i,\mu_i)
  =\begin{cases}
  (0,1)  \quad \text{if} \quad i \in \mathcal{N}_{k,>}, \\
  (1,0)  \quad \text{if} \quad i \in \mathcal{N}_{k,<}.
  \end{cases}
\end{equation}
The remaining $\lambda_i$, $\mu_i$, with $i \in \mathcal{B}_k$, are
found from equation \eqref{dual}, i.e.
\begin{equation}
  V_k^T A^T  (\lambda -  \mu) = 0, 
\end{equation}
or, after padding $\lambda_\mathcal{N}$, and $\mu_\mathcal{N}$ with
zeros to create vectors of length $m$, we find
\begin{equation} \label{l1 dual equation}
  V_k^T A^T ( \lambda-\mu)|_{\mathcal{B}_k}= B_k^T z  = - \left( V_k^TA^T\left( \lambda_{\mathcal{N}_{k,<}}-\mu_{\mathcal{N}_{k,>}} \right)\right)|_{\mathcal{B}_k}.
\end{equation}
The right hand side is known from \eqref{nonbasic_lambda} and we solve
the $k$ by $k$ system for $z = (\lambda-\mu)|_{\mathcal{B}_k}$.  We
  then have
\begin{equation}  \label{l1_lagrange}
  \begin{cases}
    \lambda_i + \mu_i  &= 1,       \\
    \lambda_i - \mu_i  &= z_i,
  \end{cases}
  \quad  \forall i \in \mathcal{B}_k
\quad \text{and} \quad
  \begin{cases}
    \lambda_i = (1+ z_i)/2,\\
    \mu_i = (1 - z_i)/2,
  \end{cases}
  \quad  \forall i \in \mathcal{B}_k.
\end{equation}
We now have the full vector $\lambda$ and $\mu$.

At this point we have feasible solution of the primal equation, we
have solved the dual equations, \eqref{firstkkt} and \eqref{dual}, and
we satisfy the complementarity conditions.

If $\lambda \ge 0$ and $\mu \ge 0$, we have found the optimal basic
set $\mathcal{B}^*_k$ for the current Krylov subspace $\mathcal{K}_k$.
The optimum for the current subspace is now $x_k = x_0 + V_k y^*_k$.


\subsection{Updating the basic set $\mathcal{B}_k$} \label{sec:pivot 1}
However, if one of the Lagrange multipliers in $\mathcal{B}_k$, from
\eqref{l1_lagrange}, is negative, we can improve the objective by
updating the basic set $\mathcal{B}_k$.  We remove the index $q$,
corresponding to the negative Lagrange multiplier, from
$\mathcal{B}_k$ and look for an entering index in $\mathcal{N}_k$.

We modify the linear system, \eqref{eq:coefficient system},  as follows
\begin{equation}
  A V_k |_{\mathcal{B}_k} y_k^+ = r_0 |_{\mathcal{B}_k}  + e_j \alpha.
\end{equation}
where $j$ is the position of  $q$ in the set $\mathcal{B}_k$.

Depending whether $\lambda_q <0$ or $\mu_q<0$ is negative, we perturb
the system with a positive or negative $\alpha$.  Indeed, when
$\lambda_q$ is negative that means that the lower bound should become
inactive to improve the objective. This means that $-\Delta \gamma_q < (r_0-
AV_ky_k^+)_q$ while the upper bound remains $(r_0- AV_ky_k^+)_q =
\Delta \gamma_q$.  This means that $(AV_k y_k^+)_q = (r_0)_q + e_q \alpha$ with
$\alpha<0$.

Similarly, if $\mu_q$ is negative, the upper bound should become
inactive to improve the objective. This is $(r_0- AV_ky_k^+)_q <
(\Delta \gamma_k)_q$ while $-(\Delta \gamma_k)_q = (r_0- AV_ky_k^+)_q$.  In other words
$(AV_k y_k^+)_q = (r_0)_q + e_q \alpha$ with $\alpha>0$.

We write $y_k^+ = y_k + d \alpha$, where the search direction $d$ is
solution of
\begin{equation}
  A V_k |_{\mathcal{B}_k} d = \begin{cases}
      -e_j \quad  \text{if} \quad \lambda_q < 0,\\
      e_j \quad  \text{if} \quad \mu_q < 0,
  \end{cases}
\end{equation}
with $e_j$ is the $j$-th unit vector, where $j$ is the position of $q
\in \mathcal{B}_k$.  The sign switch is introduced to keep $\alpha \ge
0$.

We increase now the step size $\alpha$ until an index $r \in \mathcal{N}_k$
becomes active.  The step size is determined from $(r_0 - AV_k(y_k + d \alpha))_i =0$
\begin{equation}
  \alpha = \min_{ i \in \mathcal{N}_k, \cdot > 0} \frac{(r_k)_i}{(AV_k d)_i} .
\end{equation}
The index $r \in \mathcal{N}_k$ is the index where this minimum is
reached.

We now update
\begin{equation}
  \begin{cases}
    y^+_k  &\leftarrow y_k + d \alpha, \\
    \mathcal{B}_k &\leftarrow \mathcal{B}_k  \setminus \{q\} \cup \{r\},\\
    r^+_k &\leftarrow r_k - AV_k  d\alpha.
  \end{cases}
\end{equation}

\begin{remark}
  Note that by choosing your step size $\alpha$ this way we have that
  $N_<$ and $N_>$ remains unchanged.
\end{remark}

\begin{remark}
We want to avoid that $V^T A^T (\lambda -\mu)_{\mathcal{N}_k}$, right hand side of \eqref{l1 dual equation},  is calculated each
iteration. So we want to efficiently update this right hand side. 

The algorithm first decides which $q$  is removed from $\mathcal{B}_k$ based on the
Lagrange multipliers.  Depending on the Lagrange multiplier it will be
added to $\mathcal{N}_<$ or $\mathcal{N}_>$

For example, if $\mu_q \le 0 $. We will the make the upper bound
inactive. As a result $\mu_q=0$ and $\lambda_q = 1$.  We then have
that $\mathcal{B}^+ = \mathcal{B}_k \setminus \{q \} $ and
$\mathcal{N}^+_< = \mathcal{N}_< \cup \{q\}$.  On the other hand, if
$\lambda_q \le 0$ we then make the lower bound inactive. As a result
$\mu_q=1$ and $\lambda_q = 0$ and $\mathcal{B}^+ = \mathcal{B}_k
\setminus \{q \}$ and $\mathcal{N}_> = \mathcal{N}_>\cup \{q\}$

In addition, the step size $\alpha$ determines $r$, the entering index.
This also has an influence on the $\lambda$ and $\mu$ for
$\mathcal{N}$ Since it is removed from $\mathcal{N}_<$ or
$\mathcal{N}_>$?  We now have for four cases
 \begin{equation}
    \lambda^+ - \mu^+ = \lambda -\mu
    +
    \left \{
    \begin{aligned}
      (1_q - 1_r) - (0)  &=  1_q-1_r  &&   \mathcal{N}_<^+ = \mathcal{N}_< \cup \{q\} \setminus \{r\} \, \text{and}\,\,\,  \mathcal{N}^+_< =  \mathcal{N}_>  \\
      (1_q   ) - (-1_r) &=  1_q+1_r &&  \mathcal{N}_<^+ = \mathcal{N}_< \cup \{q\}       \,\text{and} \,            \mathcal{N}_<^+ = \mathcal{N}_>  \setminus \{r\} \\
      (-1_r) -  (1_q)    &=  -1_q-1_r   &&  \mathcal{N}_<^+ =\mathcal{N}_< \setminus \{r\}  \,\text{and}\,\,\,  \mathcal{N}_>^+ =\mathcal{N}_> \cup \{q\} \\
      (0)   -(1_q-1_r)  &=  1_r-1_q   &&  \mathcal{N}_<^+ = \mathcal{N}_<   \,   \text{and}\,\,\,  \mathcal{N}_>^+ = \mathcal{N}_> \cup \{q\} \setminus \{r\}
    \end{aligned} \right.
 \end{equation}
With the same reasoning we can update
  \begin{equation} \label{update_rhs}
  V_k^T A^T \left( \lambda_{\mathcal{N}^+} -\mu_{\mathcal{N}^+} \right)
  = V_k^T A^T \left( \lambda_{\mathcal{N}} -\mu_{\mathcal{N}} \right)
  +  V_k^T A^T \left( (\lambda^+-\lambda)-(\mu^+ -\mu) \right).
  \end{equation}
\end{remark}
\subsection{Initial basic set $\mathcal{B}_{k+1}$ for Krylov subspace $\mathcal{K}_{k+1}$} \label{sec: update 1}
We expand the Krylov subspace $\mathcal{K}_k$ to
$\mathcal{K}_{k+1}$. The basis is now $V_{k+1} = [V_k,v_{k+1}]$.  The
question is what is the initial basic set $\mathcal{B}_{k+1}$ for
$\mathcal{K}_{k+1}$?

\begin{definition}
  Let $\mathcal{B}^*_k$ the optimal basic set for Krylov subspace
  $\mathcal{K}_k$, then the following auxiliary problem gives an
  initial basic set $\mathcal{B}_{k+1}$ for $\mathcal{K}_{k+1}$.
  \begin{equation}
      \begin{aligned}
        \min & \, \mathbb{1}_m^T \gamma_k \\
        \text{s.t.} &\, (AV_{k+1} y_{k+1})_{\mathcal{B}^*_{k}} = r_0 |_{\mathcal{B}^*_k},   \\
        &\, - \gamma_k \le r_0- (AV_{k+1} y_{k+1}) \le      \gamma_k. 
      \end{aligned}
    \end{equation}
\end{definition}

We can write
\begin{equation}
    AV_{k+1}y_{k+1} = AV_k (y_k^* + \Delta y_k) + Av_{k+1}\alpha,
\end{equation}
where $y^*_k$ is the solution of
$AV_k|_{\mathcal{B}^*_k} y^*_k = r_0|_{\mathcal{B}^*_k}$. We then
have, for a feasible solution of the auxiliary problem,
\begin{equation}
  AV_k|_{\mathcal{B}^*_k} \Delta y_k  + Av_{k+1}|_{\mathcal{B}^*_k} \alpha =0,
\end{equation}
or, with a search direction $d$ that fits
\begin{equation}
  AV_k|_{\mathcal{B}_k} d = - Av_{k+1}|_{\mathcal{B}_k},
\end{equation}
we can write $\Delta y_k = d \, \alpha$.  We now have to choose an
$\alpha$ that solves
\begin{equation}
  \begin{aligned}
    \min&\, \mathbb{1}_\mathcal{N}^T (\gamma_k)_{\mathcal{N}^*}   \\
     &\, - (\gamma_k)_i  \le \left( r_k- \left(AV_k d  + Av_{k+1} \right)\right)_i \alpha  \le (\gamma_k)_i \quad \forall i \in \mathcal{N}^*_k.
  \end{aligned}
\end{equation}
This is equivalent to
\begin{equation}
  \min_\alpha  |v - w \alpha |, 
\end{equation}
with ${v:= (r_k)_{\mathcal{N}_k^*}}$ and
${w = (AV_k d+ Av_{k+1})_{\mathcal{N}_k^*}}$, both vectors in
$\mathbb{R}^{m-k}$.

The derivative of $|v-w\alpha|$ is
\begin{equation}
  \frac{ d}{d \alpha}  \sum_{ i \in \mathcal{N}_k^*} \sqrt{(v - w \alpha)_i^2} = \sum_{ i \in \mathcal{N}_k^*}  \frac{1}{2} \frac{1}{\sqrt{ (v - w \alpha)^2_i}} 2 (v -w\alpha)_i w_i.
\end{equation}
Around $\alpha = 0$, we have the approximation
\begin{equation}
  \sum_{i \in \mathcal{N}_k^*} |(v - w \alpha)_i | \approx   \sum_{i \in \mathcal{N}_k^*} |v_i| -  \sum_{i \in \mathcal{N}_k^*} \frac{v_i}{|v_i|} w_i  \alpha + \mathcal{O}(\alpha^2).
\end{equation}
We can now select a descent direction for the auxiliary problem. It is
the direction where
$\text{sign}(r_k)_{\mathcal{N}^*_k}^T (AV_k d +
Av_{k+1})_{\mathcal{N}_k^*} > 0$, the objective of the auxiliary
problem diminishes.

We now determine the step size in the descent direction
\begin{equation}
  \alpha = \begin{cases}
    \min_{i \in \mathcal{N}^*_k, \cdot >0}   \frac{ (r_k)_i}{(AV_k d + Av_{k+1})_i} \quad &\text{if} \quad
    \text{sign}(r_k)_{\mathcal{N}^*_k}^T (AV_k d + Av_{k+1})_{\mathcal{N}_k^*} > 0,\\
    \max_{i \in \mathcal{N}_k^*, \cdot < 0} \frac{(r_k)_i}{(AV_k d + Av_{k+1})_i} \quad &\text{if} \quad
    \text{sign}(r_k)_{\mathcal{N}^*_k}^T (AV_k d + Av_{k+1})_{\mathcal{N}_k^*} < 0.
  \end{cases}
\end{equation}
The index $r \in \mathcal{N}_k$ where the minimum is reached is now
added to the active set. The initial solution for the Krylov subspace
$\mathcal{K}_{k+1}$ is now
\begin{equation}
  \begin{cases}
    y_{k+1} = \begin{pmatrix}
      y^*_k\\
      0
    \end{pmatrix}
    + \begin{pmatrix}
      d\\
      1
    \end{pmatrix}
    \alpha, \\
    \mathcal{B}_{k+1} = \mathcal{B}_k^* \cup \{r\},\\
    r_{k+1} = r_k -(AV_k + Av_{k+1})\alpha.
  \end{cases}
\end{equation}

\subsection{Updating the factorization in Krylov-simplex for  $\ell_1$}
In the Krylov-simplex algorithm for $\ell_1$ the basic matrix has
updates of a row and, when the Krylov subspace is expanded, a row and
a column.

We first look at updating a row in $AV_k|_{\mathcal{B}_k}$ when the
active set is updated as $\mathcal{B}_k = \mathcal{B}_k \cup \{q \}
\setminus \{ r\}$. We update the basic matrix as
\begin{equation}\label{qr_update_basic_l1}
  AV_k |_{\mathcal{B}_k \cup
\{q \} \setminus \{ r\} } = AV_k|_{\mathcal{B}_k}  + e_r (AV_k|_q-AV_k|_r)^T.
\end{equation}

When the Krylov subspace expands we extend  the basic set as
$\mathcal{B}_{k+1} = \mathcal{B}_k \cup \{r\}$.  We already have a QR
factorization of $AV_k|_{\mathcal{B}_k}$.  We can now write the matrix
$A_kV |_{\mathcal{B}_{k+1}}$ as two rank-one updates
\begin{equation}\label{qr_update_krylov_l1}
  AV_k |_{\mathcal{B}_{k+1}} =  \begin{pmatrix}
    AV_k |_{\mathcal{B}_k} & Av_{k+1}|_{\mathcal{B}_k}\\
    AV_k |_r       & Av_{k+1}|_r
  \end{pmatrix}
  = \begin{pmatrix}
    QR & 0 \\
    0 & 0 
  \end{pmatrix}
  + \begin{pmatrix} 0 & 0 \\ AV |_r & Av_{k+1}|_r
  \end{pmatrix}
  +
  \begin{pmatrix}
    0 & Av_{k+1}|_{\mathcal{B}_k}\\
    0 & 0 
  \end{pmatrix}.
\end{equation}
  
\begin{algorithm}
  \begin{algorithmic}[1]
    \STATE{ $r_0 = b-Ax_0$. }
    \STATE{ $\|r_0\|_1 = \sum |(r_0)_i|$.}
    \STATE {$\mathcal{B}_1 = \text{argmax} (r_0)$, set $\mathcal{N}_{1,<}$ and $\mathcal{N}_{1,>}$.}
    \STATE{ $V_1 =   r_0/\|r_0\|_2$.}
    \FOR{$k=1, \ldots,$ maxit}
        \STATE{$AV_k = [AV_{k-1}, Av_k]$ and store and expand $V_{k+1} = [V_k, v_{k+1}]$.} \;
        \IF{$k=1$}
        \STATE{[$Q,R$] = qr($AV|_{\mathcal{B}_1}$).}\;
        \STATE{ solve $AV|_{\mathcal{B}_1} y_k = (r_0)|_{\mathcal{B}_1}$ using $Q$ and $R$.}
        \STATE{$r_k= r_0-AV_k y_k$.}\;
        \ELSE
        \STATE{update $Q$ and $R$ using \eqref{qr_update_krylov_l1}.}\;
        \ENDIF
        \FOR{$i=1, \ldots,$ maxsimplexiters}
        \IF{$i=1$}
        \STATE{ $(\lambda_\mathcal{N},\mu_\mathcal{N} )_i = \begin{cases}
            (1,0) \quad  i \in \mathcal{N}_<,\\
            (0,1) \quad i \in \mathcal{N}_>,
          \end{cases}$ .}\; 
        \STATE{rhs = $- V_k^T A^T (\lambda_\mathcal{N}-\mu_\mathcal{N})$. } \;
        \ELSE{}
        \STATE{ update rhs using \eqref{update_rhs}.}\;
        \STATE{Update the $Q$ and $R$ factors  using \eqref{qr_update_basic_l1}. }\;
        \ENDIF
        \STATE{ solve    $ (AV_k |_{\mathcal{B}_k})^T z =$ rhs using $Q$ and $R$.}\;
        \STATE{$\lambda_{\mathcal{B}_k} = (1 + z)/2$  \quad        $\mu_{\mathcal{B}_k} = (1 - z)/2$}
        \IF{$ \min \lambda \ge 0  \quad \text{and} \quad\min \mu \ge 0$.}
        \STATE{     break;   optimal solution for $ \mathcal{K}_k$ .   }
        \STATE{  $x_k = x + Vy_k$.}
        \ELSE
        \STATE{ $  q =  \text{argmin}(\min \lambda_{ \mathcal{B}_k }, \min \mu_{ \mathcal{B}_k })$. }
        \ENDIF
        \STATE{ solve $AV_k |_{\mathcal{B}_k} d = e_j $  using $Q$ and $R$  where $j$ is index of $q$ in $\mathcal{B}_k$.}
        \STATE{$\alpha,r =  \min_{\mathcal{N}, \cdot  > 0} (r_k) / (AV_k d)$. }
        \STATE{ $y_k = y_k+ d\alpha \quad \text{and} \quad  r_k = r_k - AVd \alpha$.}\;
        \STATE{$\mathcal{B}_k =  \mathcal{B}_k \setminus \{q \} \cup \{r\}$ and update $\mathcal{N}_{k,<}$ and $\mathcal{N}_{k,>}$.}\;
        \ENDFOR
    \STATE{solve $AV_{k}  |_{\mathcal{B}_k} d  =  -Av_{k+1} |_{\mathcal{B}_k}$ using $Q$ and $R$. }
    \STATE{$\alpha_i =  (r_k)_i/ (AV_k d + A v_{k+1})_i$.}
    \IF{$    \text{sign}(r_k)|_\mathcal{N}^T  (AV_k d + A v_{k+1})|_\mathcal{N} > 0 $.}
        \STATE{ $ r = \text{argmin}_{i \in \mathcal{N}, \alpha_i  >0 } \alpha_i  $.}
        \ELSE
        \STATE{$r = \text{argmax}_{i \in \mathcal{N}, \alpha_i  < 0 }  \alpha_i$.}
        \ENDIF
        \STATE{$y_{k+1} = \begin{pmatrix} y_{k} + d\alpha \\ \alpha
          \end{pmatrix}       \quad \text{and}  \quad r_{k+1} = r_k -(AV_kd + Av_{k+1})\alpha$.}\;
      \STATE{$\mathcal{B}_{k+1} = \mathcal{B}_k \cup \{r\}$, update $\mathcal{N}_{k+1,<}$ and $\mathcal{N}_{k+1,<}$. }\;
      \ENDFOR
  \end{algorithmic}
  \caption{ Krylov-simplex for $\min_{x \in x_0 + \mathcal{K}_k} \|b-Ax\|_1$ \label{algo:l_1}}
\end{algorithm}

\section{Numerical Results} \label{sec:numerical results}
In this section we apply the two methods to some examples and discuss
the convergence behavior.

\subsection{Convergence bounds of GMRES}
The GMRES method minimizes $\|r_k\|_2$ over the Krylov subspace
method.  Recall that
\begin{lemma}
  Let $A \in \mathbb{R}^{n\times n}$, $b \in \mathbb{R}^n$ and $x^{\text{KS}_1}_k$ be the
  Krylov-Simplex iterates for $\ell_1$-norm, $x^{\text{KS}_\infty}_k$
  the iterates for Krylov-simplex for $\ell_\infty$-norm and
  $x^\text{GMRES}_k$ are the GMRES iterates for $\ell_2$ in the Krylov
  subspace $x_0 + \mathcal{K}_k(A,b)$.  We then have that
  \begin{equation}
    \| b- Ax^{\text{KS}_\infty}_k\|_\infty  \le    \|b-Ax^\text{GMRES}_k\|_2
  \end{equation}
  and
  \begin{equation}
    \| b- Ax^{\text{KS}_1}_k\|_1  \le    \sqrt{n}\|b-Ax^\text{GMRES}_k\|_2.
  \end{equation}
\end{lemma}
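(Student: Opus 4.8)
The plan is to exploit the fact that all three iterates are defined by minimizing a norm of the residual over the \emph{same} affine Krylov subspace $x_0 + \mathcal{K}_k(A,b)$, and then to bound the relevant norms against one another using elementary vector-norm inequalities on $\mathbb{R}^n$.

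First I would record the defining optimality of each Krylov-Simplex iterate. By Definition~\ref{max-norm Krylov} and Definition~\ref{one-norm Krylov},
\[
  \|b-Ax^{\text{KS}_\infty}_k\|_\infty = \min_{x \in x_0 + \mathcal{K}_k} \|b-Ax\|_\infty
  \qquad\text{and}\qquad
  \|b-Ax^{\text{KS}_1}_k\|_1 = \min_{x \in x_0 + \mathcal{K}_k} \|b-Ax\|_1.
\]
Since the GMRES iterate $x^{\text{GMRES}}_k$ lies in this same subspace, it is a feasible competitor in both minimizations; substituting $x = x^{\text{GMRES}}_k$ gives
\[
  \|b-Ax^{\text{KS}_\infty}_k\|_\infty \le \|b-Ax^{\text{GMRES}}_k\|_\infty
  \qquad\text{and}\qquad
  \|b-Ax^{\text{KS}_1}_k\|_1 \le \|b-Ax^{\text{GMRES}}_k\|_1.
\]

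Next I would close the gap with the standard norm equivalences on $\mathbb{R}^n$: for every vector $v$ one has $\|v\|_\infty \le \|v\|_2$, since the largest entry is dominated by the Euclidean length, and $\|v\|_1 \le \sqrt{n}\,\|v\|_2$ by Cauchy--Schwarz applied to $\sum_i 1\cdot|v_i|$. Applying these with $v = b-Ax^{\text{GMRES}}_k$ and chaining them with the inequalities above yields the two claimed bounds.

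I expect no serious obstacle here; the only substantive point is the observation that makes the whole argument work, namely that the $\ell_2$-minimizer produced by GMRES is admissible for the $\ell_\infty$- and $\ell_1$-minimization problems precisely because all three methods share the identical search space $x_0 + \mathcal{K}_k(A,b)$. Once this is noted, both inequalities follow from monotonicity of the minimum under a feasible substitution together with the elementary norm inequalities, and the factor $\sqrt{n}$ is exactly the sharp constant in $\|v\|_1 \le \sqrt{n}\,\|v\|_2$.
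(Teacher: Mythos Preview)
Your proof is correct and follows essentially the same approach as the paper: use that the GMRES iterate lies in the common affine Krylov subspace so it is a feasible competitor in the $\ell_\infty$- and $\ell_1$-minimizations, then apply the standard inequalities $\|v\|_\infty \le \|v\|_2$ and $\|v\|_1 \le \sqrt{n}\,\|v\|_2$ to the GMRES residual. The paper presents the two ingredients in the opposite order but the argument is the same.
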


\begin{proof}
  Recall that $\| x\|_\infty \le \|x\|_2$.  We have then have 
  \begin{equation}
    \|b - Ax^{\text{GMRES}}_k\|_\infty \le \|b - Ax^\text{GMRES}_k\|_2
  \end{equation}
  The iterate $x^{\text{GMRES}}_k$ and $x^{\text{KS}_1}_k$ sit both in
  $x_0 + \mathcal{K}_k(A,b)$. But the latter is the result of a
  optimization over $x_0 + \mathcal{K}_k(A,b)$ in the
  $\ell_\infty$-norm. So
  \begin{equation}
    \| b- Ax^{\text{KS}_\infty}_k\|_\infty  \le    \|b-Ax^\text{GMRES}_k\|_\infty.
  \end{equation}
  Combining both inequalities leads to the result.  Similar argument
  holds for $\ell_1$ using $\|x\|_1 \le \sqrt{n} \| x\|_2$
\end{proof}

For a non-square matrix $A \in \mathbb{R}^{m \times n}$ the iterates
$x^{\text{KS}_1}_k$ and $x^{\text{KS}_\infty}_k$ are described using a
Golub-Kahan basis. Also the GMRES iterates on $x^\text{GMRES}_k$, now
using the normal equations $A^TAx =A^Tb$, are in the Krylov subspace
$x_0 + \mathcal{K}_k(A^T A,A^Tb)$.  We can then proof the same
inequalities as in the lemma above.

The lemma is useful for accelerating the convergence of a
Krylov-Simplex method.  Indeed, a good preconditioner for the GMRES
will also be a good preconditioner for a Krylov-simplex method.
\begin{figure}
  \begin{center}
    \includegraphics[width=0.9\textwidth]{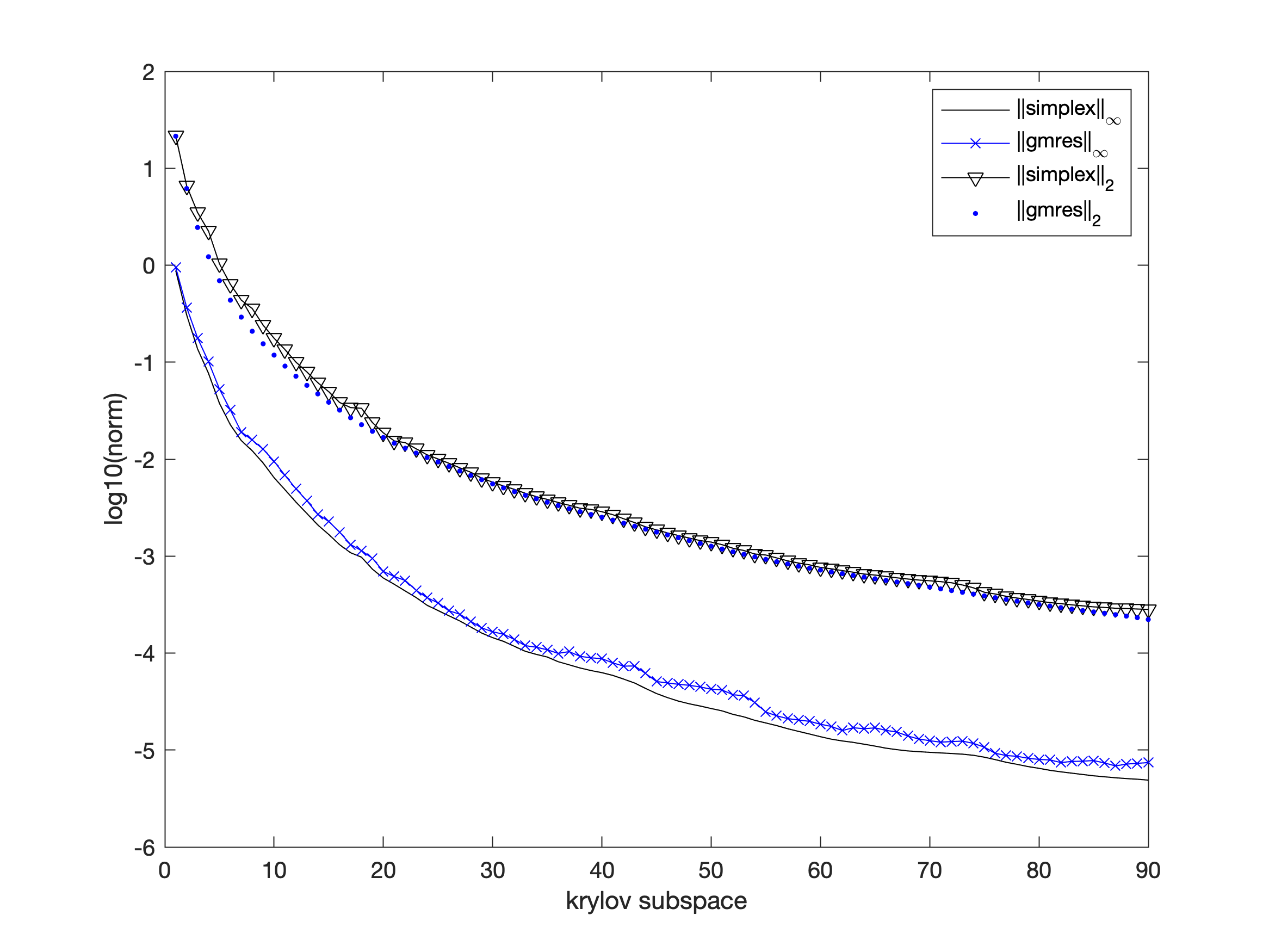}
  \end{center}
  \caption{We compare $\|r-Ax^{\text{KS}}_k\|_\infty$ and $\|r-Ax^{\text{GMRES}}_k\|_2$ where $x_k$ is
    optimized over the same Krylov subspace.  The first is optimized with Krylov-Simplex and the second with GMRES.
    The figure also shows the $\|r-Ax^{\text{KS}} _k\|_2$ and $\|r-Ax^{\text{GMRES}}_k\|_\infty$ and confirms the inequalities form the lemma  \label{fig:max2}}
\end{figure}
\begin{figure}
  \begin{center}
    \includegraphics[width=0.6\textwidth]{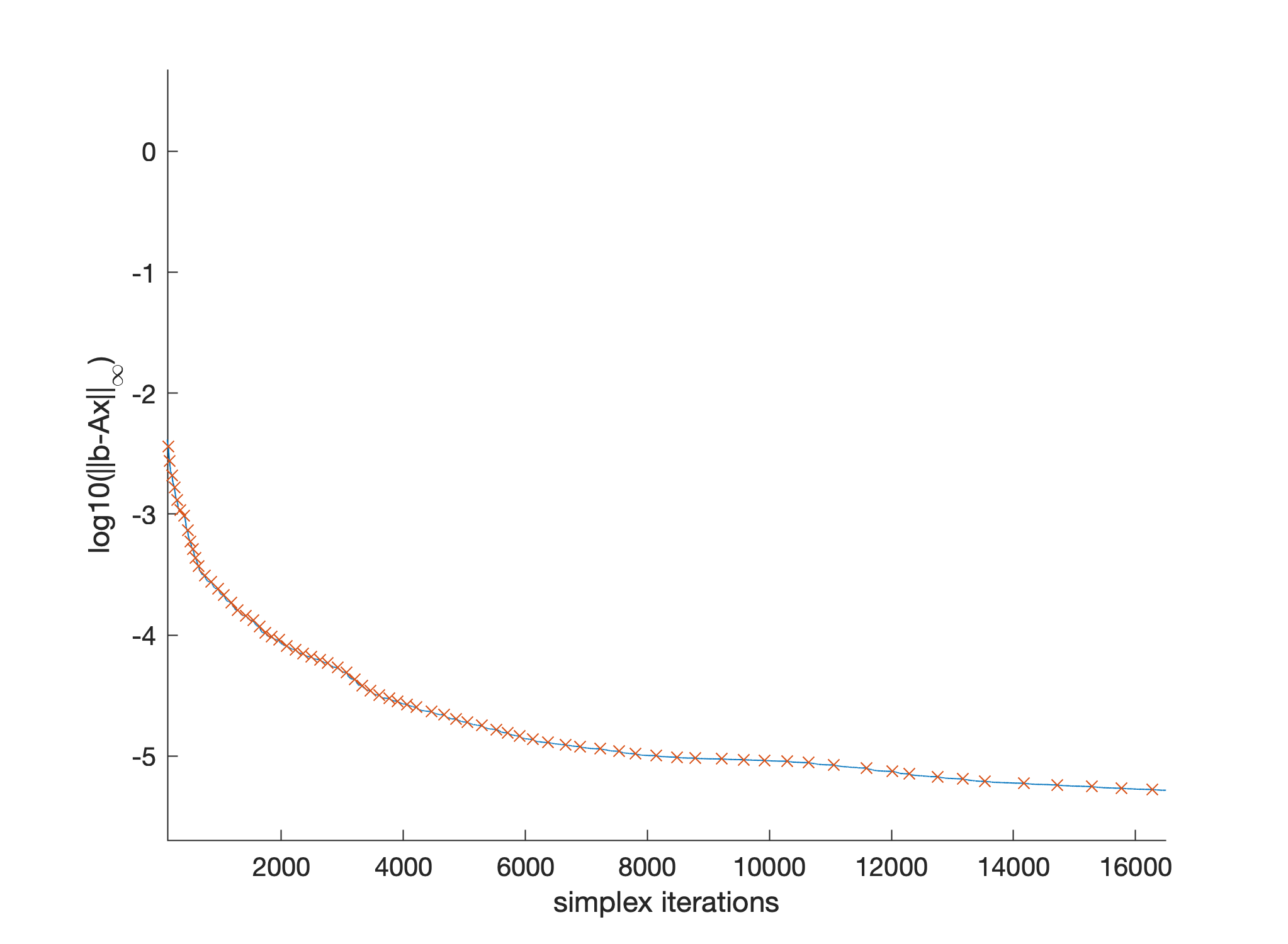}
  \end{center}
    \caption{ The convergence of $\|r-Ax\|_\infty$ as a function of the inner simplex iteration. The crosses indicate the points where the Krylov subspace is expanded \label{fig:max}}
\end{figure}

\subsection{$\ell_\infty$ example}
For a square matrix $A \in \mathbb{R}^{n \times n}$ we compare the
convergence of Krylov-Simplex in the $\ell_\infty$ norm and GMRES in
the $\ell_2$-norm. The matrix is symmetric represents a blurring operator
on a 125 by 125 grid.

For each solution in both iterative methods we can
calculate the the residuals in the $\ell_\infty$ and $\ell_2$ norm.
The figure \ref{fig:max} and \ref{fig:max2} shows that
\begin{equation}
  \|b-Ax_k^{\text{KS}_\infty}\|_\infty \le   \|b-Ax_k^{\text{GMRES}}\|_\infty
  \le \|b-Ax_k^{\text{GMRES}}\|_2 \le   \|b-Ax_k^{\text{KS}_\infty}\|_2
\end{equation}
Interesting is that the number of inner simplex iterations stay
limited to a few hundred each Krylov iteration.

\subsection{$\ell_1$ Example}
We illustrate the $\ell_1$ minimization of the residual with an image
deblurring example where some pixels are corrupted.

We take $N\times N$ image and generate a blurred image by weighting
each pixel with the stencils
\begin{equation}
  \begin{matrix}
     &1& \\
    1 &4& 1\\
    &1&
  \end{matrix}
  \quad \text{and} \quad
  \begin{matrix}
    1 & 1\\
    1 & 1
  \end{matrix}
\end{equation}
This generates a over determined data set $b$ of size $2N^2$, $N^2$ for
the first stencil and $N^2$ for the second stencil. We then corrupt
some data elements by replacing the data $b_i = b_i 0.999$. We create
a preconditioner by an incomplete Cholesky factorization of the normal
equation $A^TA$.  The Golub-Kahan basis is then generated with
$A L^{-1}$ as matrix operator, using a matrix-vector product with a
back-substitution of the application of $L^{-1}$.

In figure \ref{fig:l1} we show the convergence as function of the
number of inner simplex iterations and the corresponding convergence
as a function of the size of the Krylov subspace. We also show results
of a calculation where the inner simplex iterations are limited.

We typically see the staircase effect where the initial simplex inner
iteration after the Krylov subspace is expanded rapidly lead to and
improved basic set and convergence.  But then the benefit from
updating the basic set diminishes. The negative lagrange multipliers
become smaller and smaller and the gain from updating the basic set
becomes smaller. 

Also note, that in contrast to the $\ell_\infty$ example, the number
of inner simplex iteration can be significant, more then 20.000,
before the optimal basic set is found.  We therefore have also limited
the number of inner iterations and rather expand the Krylov subspace
than update the basic set, see figure \ref{fig:l1_2}.

\begin{figure}
  \begin{center}
    \includegraphics[width=0.6\textwidth]{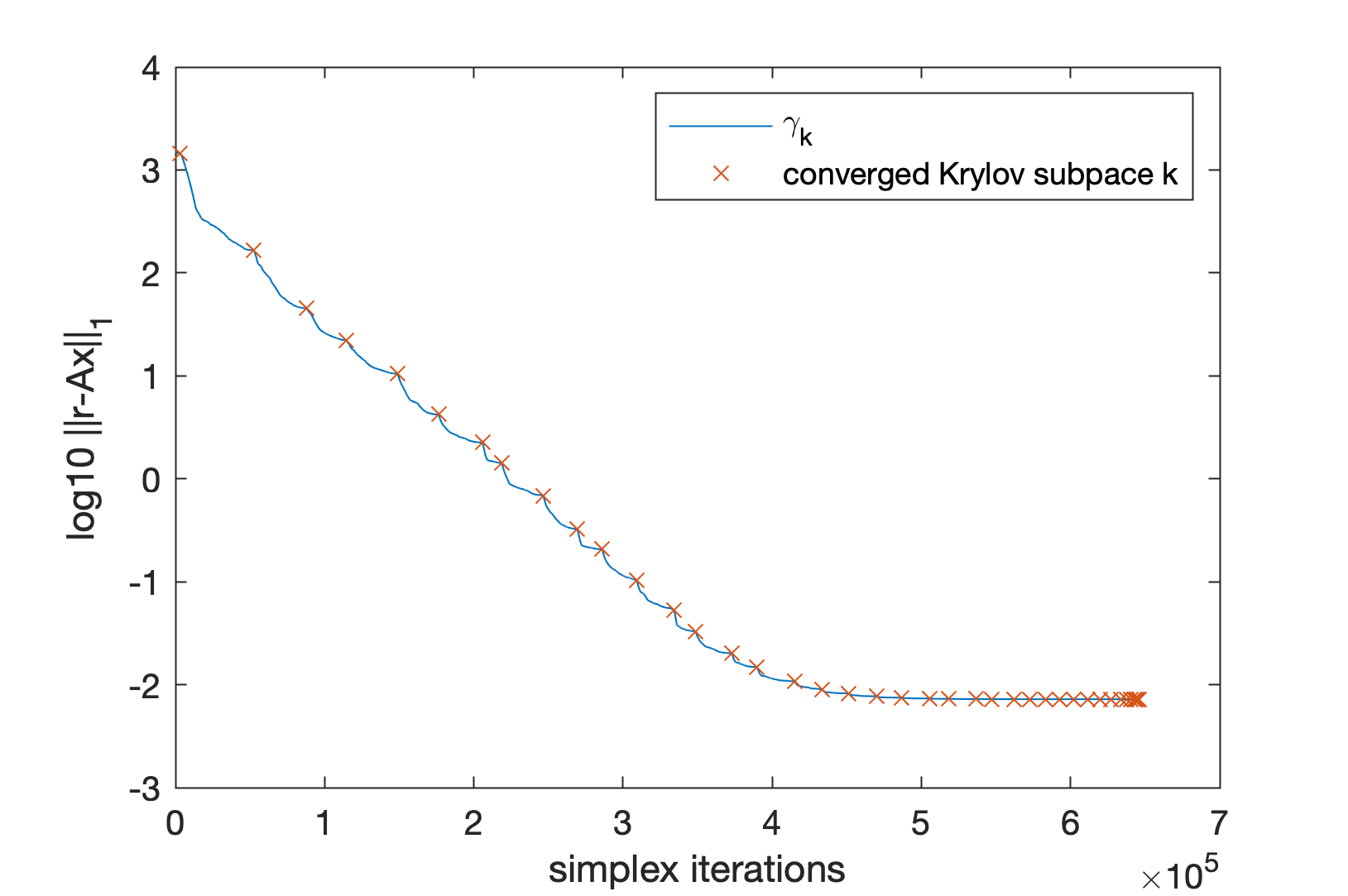}
  \end{center}
  \begin{center}
    \includegraphics[width=0.6\textwidth]{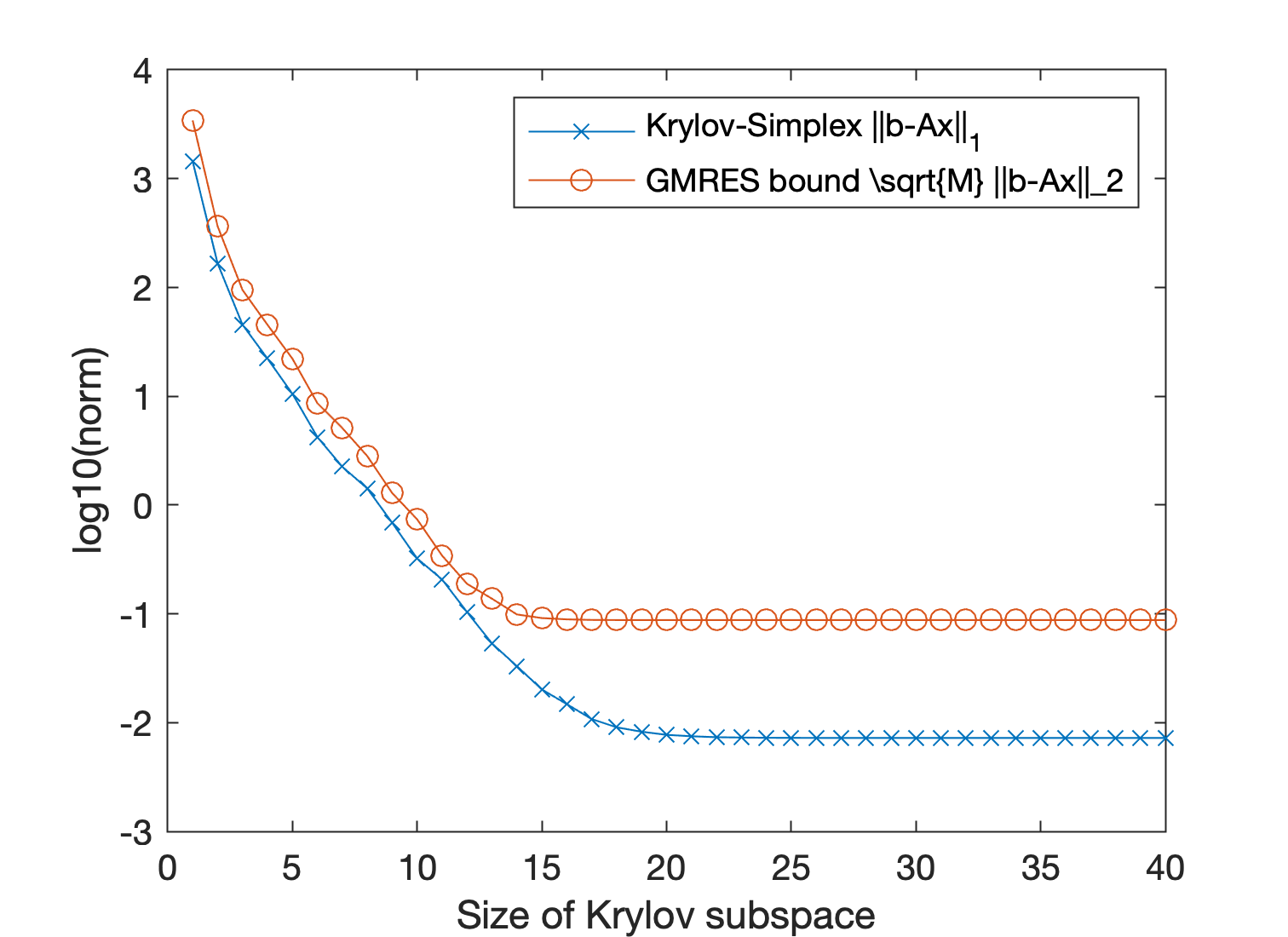}
  \end{center}
  \caption{ \textbf{Top.} The convergence of $\|b-Ax\|_1$ as a
    function of the inner simplex iterations. The convergence plateaus
    because the right hand side $b$, after corruption is not in the
    range of the $A$. The crosses indicate the optimal points for each
    Krylov subspace. \textbf{Bottom.} The convergence as a function of
    the size of the Krylov subspace. We also show the corresponding
    upper bound from GMRES applied to the Normal equations, based on
    the lemma. \label{fig:l1}}
\end{figure}
\begin{figure}
  \begin{center}
    \includegraphics[width=0.6\textwidth]{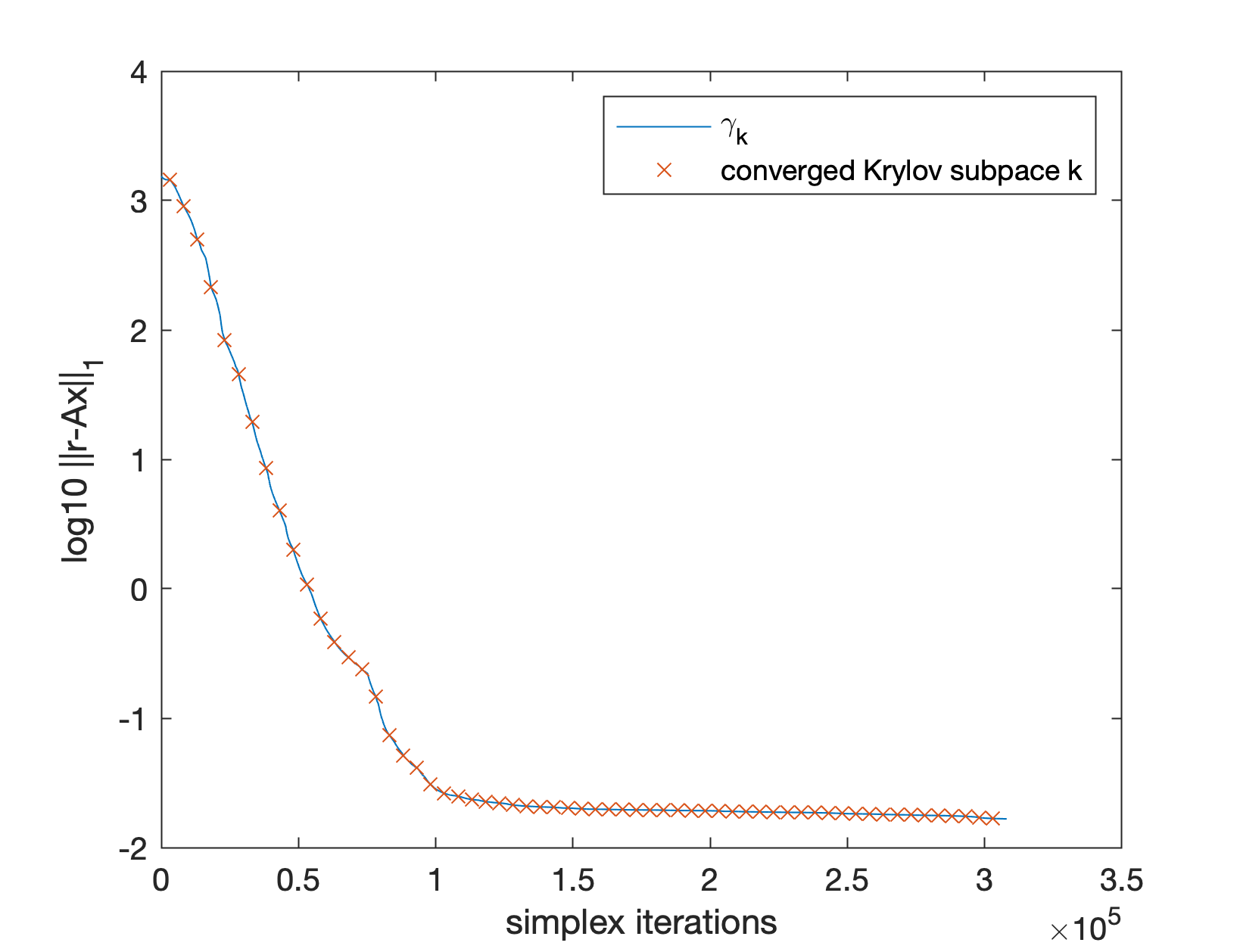}
  \end{center}
  \begin{center}
    \includegraphics[width=0.6\textwidth]{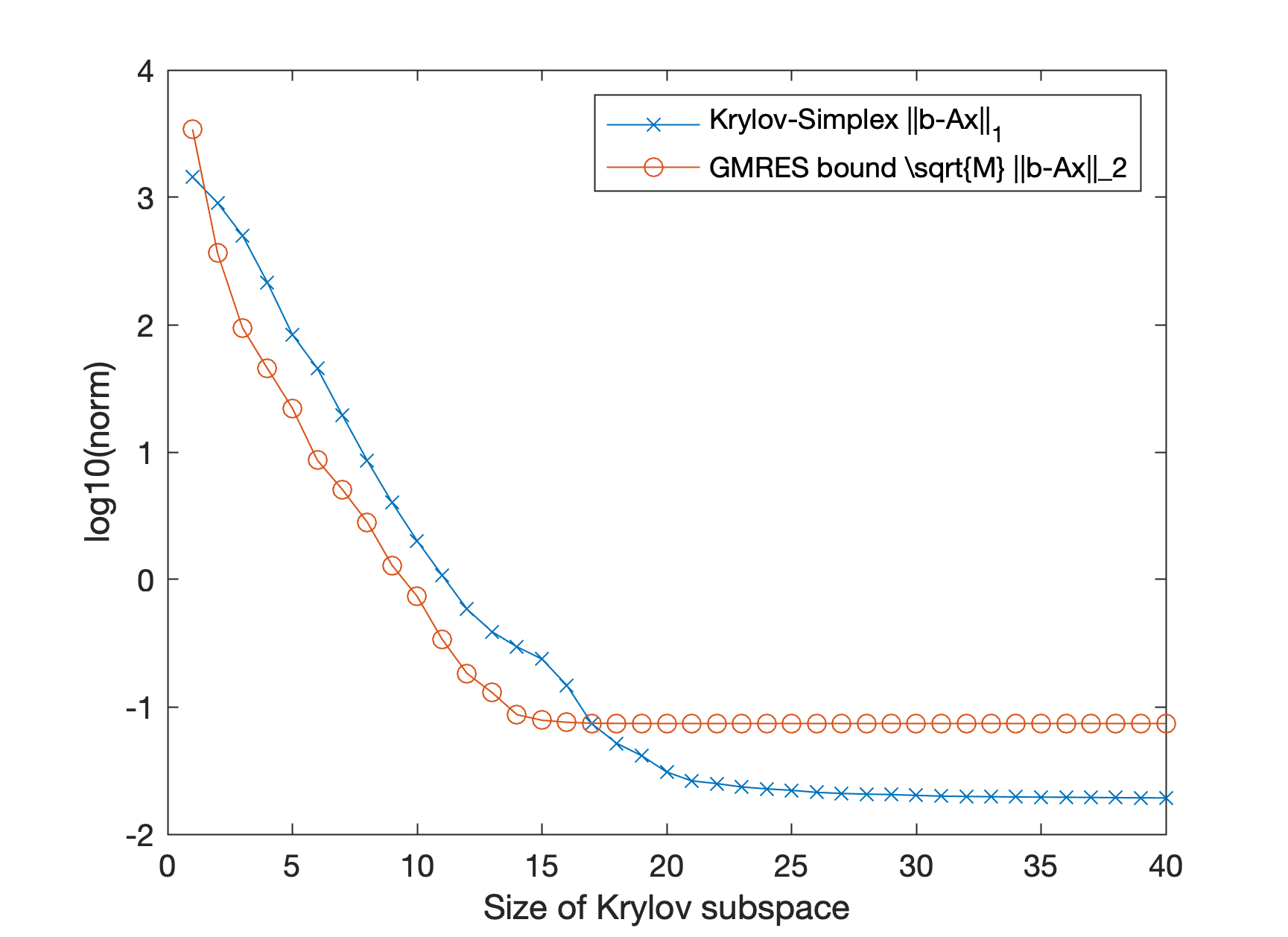}
  \end{center}
  \caption{ \textbf{Top.} We now limit the inner simplex iterations to
    8000.  We see that mainly benefit from the initial convergence
    within each Krylov subspace.  \textbf{Bottom.} Since we do not
    fully optimize within each Krylov subspace, we are not below the
    GMRES bound. However, eventually we drop below the
    bounds. \label{fig:l1_2}}
\end{figure}

\section{Discussion and Conclusions}
In this paper we explore the solution of the projected LP problems
that appear when we minimize the residual, $b-Ax$, over a Krylov
subspace in the $\ell_1$ or $\ell_\infty$-norm. This gives a series of
growing LP problems for the coefficients $y_k$ that describe the
solution in each Krylov subspace.  These are LP problems with dense
matrices that are solved with a primal simplex method where the
initial basic set can be easily generated from the previous Krylov
iteration.  An inner loop then updates this basic set until it is
optimal for the current Krylov subspace.  When the basic set changes,
the basic matrix is changed by a single row.  We keep its a QR
factorization up to date during the full algorithm with QR updates
only.

Numerical experiments show that the inner simplex does not need to be
fully converged before the Krylov subspace can be expanded.  This
limits the number of inner simplex iterations and only slightly delays
the Krylov convergence.  As the Krylov subspace expands we have to
find additional coefficients $y_k$ and update the existing once, hence
our choice for the primal simplex method.

At this point we have not explicitly exploited the Krylov structure.
We currently store both $AV_k$ and $V_k$.  The storage of $AV_k$ can
be avoided by exploiting the Arnoldi or bi-diagonalization relations
and using an additional multiplication with a small dense matrix.
This is a topic of future research.

Another topic of future research is the parallel implementation. Due
to efficient usage of QR updates, the most time consuming part in the
current research implementation is the sparse-matrix vector product,
the update of search direction with $AV_k d$ and finding the optimal
step size $\alpha$ using a \texttt{max}/\texttt{min} over the long
vector.  All these parts can be easily parallelized over a large
number of cores and computer.

A significant drawback of the method, compared to for example GMRES, is
that we need to calculate  $AV_k d$, see algorithm \ref{stepsizesimplex},  each inner simplex iteration to be able to check the bounds
and determine the step size. This is a multiplication of $AV_k$ or
$V_{k+1} H_{k+1}$ with $d \in \mathbb{R}^k$.  In contrast, GMRES only
requires $H_{k+1,1}$ to determine the coefficients.  Although this can
be executed in parallel it is a significant limitation of the proposed
method.

As a result the current prototype is not competitive with direct
solution of the LP problem with interior point methods. The second
example takes about 10sec to solve with an off-the shelve
interior-point LP solver, while our method takes about 60 sec.

For the $\ell_\infty$ example the number of inner simplex iteration is
limited. For $\ell_1$ the total number of inner simplex iteration can
be significant, up to 100k iterations.

We have named this a Krylov-Simplex method because there is an outer
loop where the Krylov method is expanded and inner loop where the
simplex method solves the projected LP problem.  In analogy to
Newton-Krylov where an outer Newton iteration is combined with a
Krylov inner iteration.

In the future we will also explore if column generation to find the
optimal vector to expand the subspace that represents the solution.
Also replacing the simplex method with an interior-point method is
wort exploring.  It is then not possible to warm-start from the
previous Krylov subspace, but the search for the optimal $y_k$ might
be faster because we need as many inner iterations that each require
the calculation of $AV_kd$.

\section*{Acknowledgments}
Wim Vanroose is thankful to Michael Saunders, Matthias Bollh\"ofer,
Julian Hall and Jacek Gondzio for feedback and fruitful discussions.

\bibliographystyle{siam}
\bibliography{references}
\end{document}